\documentclass[11pt,notitlepage,twoside,a4paper]{amsart}
 \usepackage{amsfonts}

\usepackage{amsmath,amssymb,enumerate}

\usepackage{epsfig,fancyhdr,color}

\usepackage{amssymb}
\usepackage{amsmath,amsthm}
\usepackage{latexsym}
\usepackage{amscd}
\usepackage{psfrag}
\usepackage{graphicx}
\usepackage[latin1]{inputenc}
\usepackage[all]{xy}
\usepackage[mathcal]{eucal}

\definecolor{NoteColor}{rgb}{1,0,0}


\renewcommand{\textsc}{\textcolor{red}}

%


\newtheorem{theorem}{\rm\bf Theorem}[section]
\newtheorem{proposition}[theorem]{\rm\bf Proposition}
\newtheorem{lemma}[theorem]{\rm\bf Lemma}
\newtheorem{corollary}[theorem]{\rm\bf Corollary}
\newtheorem*{theorem 1}{\rm\bf Proposition 1}
\newtheorem*{theorem 2}{\rm\bf Proposition 2}

\theoremstyle{definition}

\theoremstyle{remark}
\newtheorem{remark}[theorem]{\rm\bf Remark}

\newtheorem{questions}[theorem]{\rm\bf Questions}

\def\interieur#1{\mathord{\mathop{\kern 0pt #1}\limits^\circ}}


\title[The horofunction compactification of Teichm\"uller metric]
{The horofunction compactification of Teichm\"uller metric}

\author{Lixin  Liu}
\address{Lixin Liu, Department of Mathematics, Sun Yat-sen(Zhongshan) University, 510275, Guangzhou, P. R. China}
\email{mcsllx@mail.sysu.edu.cn}

\author{Weixu  Su}
\address{Weixu Su, Department of Mathematics, Fudan University, 200433, Shanghai, P. R. China}
\email{suweixu@gmail.com}

\date{\today}

\thanks{The work is partially supported by NSFC : 10871211}

\begin{document}

\begin{abstract}
We show that the horofunction compactification of Teichm\"uller space, endowed with the Teichm\"uller metric,
is homeomorphic to the Gardiner-Masur compactification.
\end{abstract}

\maketitle


\noindent AMS Mathematics Subject Classification:   32G15 ; 30F60 ; 57M50 :57N05.
\medskip

\noindent Keywords: Teichm\"uller space; Teichm\"uller metric; compactification.
\medskip

\tableofcontents

\section{Introduction}\label{intro}
Let $S=S_{g,n}$ be an oriented surface of genus $g$ with $n$ punctures. We assume that $3g-3+n > 0$.
Denote by $\mathcal{T}(S)$ the Teichm\"uller space of $S$ and $d_T(\cdot, \cdot)$ the Teichm\"uller metric.
It is well-known that the Teichm\"uller metric is a complete Finsler metric, and that any two points in Teichm\"uller space
can be connected by a unique geodesic. For $3g-3+n=1$,
 the Teichm\"uller metric is isometric to the Poincar\'{e} metric on the unit disc, which has constant negative curvature.
 For $3g-3+n>1$, it was proved by Masur \cite{Masur75} that the Teichm\"uller metric
does not have negative curvature in the sense of Busemann, and it was proved by Masur and Wolf \cite{MW} that the Teichm\"uller metric
is not Gromov hyperbolic. For more progress in the study of the Teichm\"uller metric, we refer to Masur \cite {Masur10} and the references there.

There are several compactifications of $\mathcal{T}(S)$, such as Thurston's compactification, Bers' compactification and the Gardiner-Masur  compactification.
Compactification of $\mathcal{T}(S)$ is important in the study of mapping class groups and
degeneration of quasi-Fuchsian groups (see Thurston \cite{Thurston}, Bers \cite{Bers} and Ohshika \cite{Ohshika}).
A natural question is to study the relation between the Teichm\"uller metric  and the compactifications of $\mathcal{T}(S)$.

Teichm\"uller proved that there is a homeomorphism from the open real $(6g-6+2n)$-dimensional ball to $\mathcal{T}(S)$, realized by
the radial map along Teichm\"uller geodesic rays from a fixed base-point. Extending the homeomorphism to the closed ball defines
a compactificaiton of $\mathcal{T}(S)$, which is called Teichm\"uller's compactification and denoted
by $\overline{\mathcal{T}(S)}^T$.

The \emph{ mapping class group}  \index{ mapping class group} is the group of homotopy classes of orientation-preserving homeomorphisms of $S$ and the \emph{extended mapping class group}  is the group of homotopy classes of homeomorphisms of $S$.
Kerckhoff \cite{Kerckhoff} first proved that the action of the mapping class group on $\mathcal{T}(S)$
does not extend continuously to $\overline{\mathcal{T}(S)}^T$. Since the action of the mapping class group on $\mathcal{T}(S)$
extends continuously to Thurston's compactification (Thurston \cite{Thurston}, FLP \cite{FLP}),   it follows that Teichm\"uller's compactification is different from Thurston's compactification.
Masur \cite{Masur} further showed that if a Teichm\"uller geodesic ray is uniquely ergodic
or Strebel, then it converges to a limit point in Thurston's boundary.
There exist Teichm\"uller geodesic rays which do not converge in Thurston's boudary,
see Lenzhen \cite{Lenzhen}.

The aim of this chapter is to relate two distinct compactifications of $\mathcal{T}(S)$:
one is the horofunction compactification (defined by Gromov \cite{Gromov}) with respect to the Teichm\"uller
metric, and such a compactification could be defined on a quite general
class of metric spaces;
the other is  the Gardiner-Masur compactification, defined by using extremal length of simple closed curves.

The main result in this chapter is:
\begin{theorem}\label{Thm:main}
The horofunction compactification
of $(\mathcal{T}(S),d_T)$ is homeomorphic to the Gardiner-Masur compactificaiton of $\mathcal{T}(S)$.
\end{theorem}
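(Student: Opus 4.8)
The plan is to let Kerckhoff's formula for the Teichm\"uller metric do the work: it rewrites every horofunction as an explicit supremum of ratios of extremal lengths, and the functions that appear are, up to scale, exactly the points of the Gardiner--Masur embedding. Fix a basepoint $x_0\in\mathcal{T}(S)$ and let $\mathcal{S}$ be the set of isotopy classes of essential simple closed curves on $S$. Recall Kerckhoff's identity $d_T(X,Y)=\tfrac12\log\sup_{\alpha\in\mathcal{S}}\mathrm{Ext}_Y(\alpha)/\mathrm{Ext}_X(\alpha)$ and the theorem of Gardiner and Masur that each $\mathrm{Ext}_X$ extends to a continuous, strictly positive, degree-two homogeneous function on the space $\mathcal{MF}$ of measured foliations. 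Put $\mathcal{E}_X(\alpha):=e^{-d_T(x_0,X)}\sqrt{\mathrm{Ext}_X(\alpha)}$; then Kerckhoff's formula gives $\sup_{\alpha}\mathcal{E}_X(\alpha)/\sqrt{\mathrm{Ext}_{x_0}(\alpha)}=1$, so $0\le\mathcal{E}_X\le\sqrt{\mathrm{Ext}_{x_0}}$ pointwise, and, substituting Kerckhoff's formula for $d_T(Z,X)$ and for $d_T(x_0,X)$ into the horofunction $\psi_X(Z):=d_T(Z,X)-d_T(x_0,X)$ and using symmetry of $d_T$, one obtains the clean identity
\[
\psi_X(Z)=\log\sup_{\alpha\in\mathcal{S}}\frac{\mathcal{E}_X(\alpha)}{\sqrt{\mathrm{Ext}_Z(\alpha)}}=:\Psi_{\mathcal{E}_X}(Z).
\]
Thus the horofunction attached to $X$ depends only on $\mathcal{E}_X$, and $[\mathcal{E}_X]=[\sqrt{\mathrm{Ext}_X(\cdot)}]$ is precisely the Gardiner--Masur coordinate of $X$.

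Next I would build a map $\Phi$ from the Gardiner--Masur compactification $\overline{\mathcal{T}(S)}^{GM}$ to the horofunction compactification $\overline{\mathcal{T}(S)}^{h}$ that is the identity on $\mathcal{T}(S)$. The set $\{\mathcal{E}_X:X\in\mathcal{T}(S)\}$ sits inside the compact product space $\{\,f:\mathcal{S}\to\mathbb{R}_{\ge0}\ :\ f\le\sqrt{\mathrm{Ext}_{x_0}}\,\}$. I would show that pointwise limits of sequences $\mathcal{E}_{X_n}$ with $X_n\to\infty$ still satisfy $\sup_\alpha f(\alpha)/\sqrt{\mathrm{Ext}_{x_0}(\alpha)}=1$ (the $Z=x_0$ case of the continuity statement below), so that projectivization identifies the closure of $\{\mathcal{E}_X\}$ with $\overline{\mathcal{T}(S)}^{GM}$; in particular each Gardiner--Masur boundary point $\xi$ acquires a canonical representative $\mathcal{E}_\xi$, and we set $\Phi(\xi):=\Psi_{\mathcal{E}_\xi}$. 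For $\Phi$ to be well defined and continuous I would prove the key analytic statement: for each $Z\in\mathcal{T}(S)$ the functional $f\mapsto\Psi_f(Z)=\log\sup_\alpha f(\alpha)/\sqrt{\mathrm{Ext}_Z(\alpha)}$ is continuous for pointwise convergence on the relevant family of functions. Writing the supremum over the compact space $\mathcal{PMF}$ (simple closed curves being dense there) and using continuity and strict positivity of $\mathrm{Ext}_Z$ on $\mathcal{MF}$, one can localize the supremum to a compact region and pass to the limit; since the horofunctions $\psi_X$ are $1$-Lipschitz, pointwise convergence upgrades to uniform convergence on compacta, so $\mathcal{E}_{X_n}\to\mathcal{E}_\xi$ forces $\psi_{X_n}\to\Psi_{\mathcal{E}_\xi}$. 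This makes $\Phi$ a well-defined continuous map, and surjectivity onto $\overline{\mathcal{T}(S)}^{h}$ is then automatic: any horofunction is a limit of $\psi_{X_n}$ with $X_n\to\infty$, and after passing to a Gardiner--Masur-convergent subsequence it equals some $\Psi_{\mathcal{E}_\xi}$.

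It remains to show $\Phi$ is injective, i.e. that a boundary point $\xi$ is recovered from $\Psi_{\mathcal{E}_\xi}$. From the formula, $\mathcal{E}_\xi(\beta)\le e^{\Psi_{\mathcal{E}_\xi}(Z)}\sqrt{\mathrm{Ext}_Z(\beta)}$ for every $Z$ and every $\beta\in\mathcal{S}$; I would show equality is approached along a deformation $Z=Z_t$ pinching $\beta$, so that $\mathrm{Ext}_{Z_t}(\beta)\to0$ while $\mathrm{Ext}_{Z_t}(\mu)\to\infty$ for all $\mu$ with $i(\mu,\beta)>0$ and stays controlled for $\mu$ disjoint from $\beta$; reorganizing $\sup_\alpha\mathcal{E}_\xi(\alpha)\sqrt{\mathrm{Ext}_{Z_t}(\beta)/\mathrm{Ext}_{Z_t}(\alpha)}$ over $\mathcal{PMF}$ one sees it concentrates on the class of $\beta$. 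Hence $\mathcal{E}_\xi(\beta)=\inf_{Z}e^{\Psi_{\mathcal{E}_\xi}(Z)}\sqrt{\mathrm{Ext}_Z(\beta)}$ for every $\beta$, so $\xi=[\mathcal{E}_\xi]$ is determined by $\Psi_{\mathcal{E}_\xi}$, and $\Phi$ is injective. Finally $\overline{\mathcal{T}(S)}^{GM}$ is compact while $\overline{\mathcal{T}(S)}^{h}$ is Hausdorff (it is a subspace of $C(\mathcal{T}(S))$ with the topology of uniform convergence on compacta, and $(\mathcal{T}(S),d_T)$ is proper, so this compactification is genuinely compact), so the continuous bijection $\Phi$ is a homeomorphism; being the identity on the common dense subset $\mathcal{T}(S)$, it yields the asserted homeomorphism of compactifications.

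The main obstacle is the continuity of the supremum functional $f\mapsto\sup_\alpha f(\alpha)/\sqrt{\mathrm{Ext}_Z(\alpha)}$ under pointwise limits of $f$ (together with its companion, that such limits keep the supremum equal to $1$ when $Z=x_0$): because $\mathcal{S}$ and $\mathcal{MF}$ are not compact and the functions $\mathrm{Ext}_{X_n}$ are not uniformly bounded on $\mathcal{PMF}$, the supremum is not obviously stable, and controlling it requires the full strength of Gardiner and Masur's continuity and positivity results for extremal length on $\mathcal{MF}$, the compactness of $\mathcal{PMF}$, and fine information on the homogeneous extensions of the Gardiner--Masur boundary functions to $\mathcal{MF}$ (for instance via Miyachi-type extended intersection numbers). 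The pinching argument that yields injectivity rests on the same circle of facts.
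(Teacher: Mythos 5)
Your overall architecture is the same as the paper's: use Kerckhoff's formula to write the horofunction of $X$ as $\log\sup_{\alpha}\mathcal{E}_X(\alpha)/\mathrm{Ext}_Z(\alpha)^{1/2}$ with $\mathcal{E}_X=e^{-d_T(x_0,X)}\mathrm{Ext}_X^{1/2}$, extend these normalized functions to the Gardiner--Masur boundary, show the resulting map $\overline{\mathcal{T}(S)}^{GM}\to C(\mathcal{T}(S))$ is continuous and injective, and finish with the compact-to-Hausdorff argument. The continuity/well-definedness step you flag as the ``main obstacle'' is indeed where the paper invests real work, and it is not closed in your write-up: pointwise convergence of $\mathcal{E}_{X_n}$ on $\mathcal{S}$ gives no control of $\sup_{\alpha\in\mathcal{PMF}}\mathcal{E}(\alpha)/\mathrm{Ext}_Z(\alpha)^{1/2}$, and what is actually needed is Miyachi's theorem that, along subsequences, $\mathcal{E}_{X_n}$ converges up to a positive multiple \emph{uniformly on compact subsets of} $\mathcal{MF}$ to a continuous homogeneous boundary function $\mathcal{E}_P$, together with the normalization $\mathcal{L}_P=\mathcal{E}_P/\sup_{\nu}(\mathcal{E}_P(\nu)/\mathrm{Ext}_{X_0}(\nu)^{1/2})$ and the equivalence ``$P_n\to P$ in $\overline{\mathcal{T}(S)}^{GM}$ iff $\mathcal{L}_{P_n}\to\mathcal{L}_P$ locally uniformly'' (the paper's Theorem 4.2). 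This part of your plan is the same route as the paper's and could be repaired by citing those results precisely, so by itself it is a citation gap rather than a wrong idea.

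The genuine divergence, and the genuine gap, is your injectivity argument. You propose to recover $\mathcal{E}_\xi(\beta)$ from the horofunction via $\mathcal{E}_\xi(\beta)=\inf_Z e^{\Psi_{\mathcal{E}_\xi}(Z)}\mathrm{Ext}_Z(\beta)^{1/2}$, proved by pinching $\beta$ and asserting that $\sup_{\alpha\in\mathcal{PMF}}\mathcal{E}_\xi(\alpha)\,\mathrm{Ext}_{Z_t}(\beta)^{1/2}/\mathrm{Ext}_{Z_t}(\alpha)^{1/2}$ ``concentrates on the class of $\beta$.'' The dichotomy ``$\mathrm{Ext}_{Z_t}(\mu)\to\infty$ if $i(\mu,\beta)>0$, controlled if $i(\mu,\beta)=0$'' is not uniform over $\mathcal{PMF}$: the dangerous region is the locus of classes with small or vanishing intersection with $\beta$, e.g.\ classes of the form $[\mu+s\beta]$ with $i(\mu,\beta)=0$ and $s$ growing with $t$, where pointwise limits give no bound on the supremum. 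Some such families can be handled using homogeneity and continuity of $\mathcal{E}_\xi$ plus monotonicity of extremal length, but a uniform bound by $\mathcal{E}_\xi(\beta)$ requires quantitative information about arbitrary boundary functions $\mathcal{E}_\xi$ (which are a priori only locally uniform limits of normalized $\mathrm{Ext}_{X_n}^{1/2}$ along sequences with no geodesic structure), and you neither prove nor cite such estimates; the duality you want is plausible and related to later work of Miyachi and Walsh, but as written it is an unproved key lemma. The paper circumvents it: to separate $P\neq Q$ it chooses a uniquely ergodic $\mu_0$ in a region of $\mathcal{PMF}$ where $\mathcal{L}_P<\mathcal{L}_Q$, runs the Teichm\"uller ray $r(t)$ with vertical foliation $\mu_0$, invokes Miyachi's theorem that $r(t)$ converges in $\overline{\mathcal{T}(S)}^{GM}$ to $[\mu_0]$ with limit function $i(\mu_0,\cdot)$ (sharpness of Minsky's inequality), and shows the supremum defining $\Psi_P(r(t))$ localizes in that region for large $t$, whence $\Psi_P(r(t))<\Psi_Q(r(t))$. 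Either prove your localization uniformly (which will force you into exactly this circle of Minsky-type estimates for boundary functions) or replace the pinching step by an argument along the lines of the paper's ray construction.
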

The proof of Theorem  \ref{Thm:main} and an explict homeomorphism from the Gardiner-Masur compactificaiton to the horofunction
compactification will be given in Section \ref{main}.

There are two corollaries (see Proposition \ref{pro:mod} and Proposition \ref{pro:geo} for the proofs):

\begin{corollary}\label{corollary1}
The action of the extended mapping class group on $\mathcal{T}(S)$ extends continuously to the
Gardiner-Masur boundary.
\end{corollary}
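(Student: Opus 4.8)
We plan to deduce this from Theorem~\ref{Thm:main} together with the general functoriality of the horofunction compactification under isometries. The first step is the elementary observation that the extended mapping class group acts on $(\mathcal{T}(S),d_T)$ by isometries: representing a point of $\mathcal{T}(S)$ by a marked Riemann surface $(X,f)$ and letting a homeomorphism $\varphi$ of $S$ act by $(X,f)\mapsto(X,f\circ\varphi^{-1})$, one sees that precomposition with $\varphi^{-1}$ does not alter the homotopy class of a change-of-marking map, hence does not change the extremal quasiconformal dilatation; therefore $d_T$ is preserved. (Note that only this easy inclusion of the extended mapping class group into the isometry group is needed, not Royden's converse.)

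The second step is to recall that an isometry $g$ of a (proper) metric space $(Z,d)$ extends canonically to a homeomorphism of its horofunction compactification. Fix the basepoint $b$ used to define the compactification and identify each boundary point with a horofunction $h\colon Z\to\mathbb{R}$ normalized by $h(b)=0$; set $(g\cdot h)(z)=h(g^{-1}z)-h(g^{-1}b)$. One checks directly that this is a left action, that on the embedded copy of $Z$ --- where $h=h_x$ with $h_x(z)=d(z,x)-d(b,x)$ --- it gives $g\cdot h_x=h_{gx}$, and that $h\mapsto g\cdot h$ is continuous for the topology of uniform convergence on compact sets, since $g^{-1}$ is a homeomorphism taking compact sets to compact sets and $h\mapsto h(g^{-1}b)$ is continuous. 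Thus $g$ acts as a homeomorphism of the horofunction compactification extending its action on $Z$; applying this to $Z=\mathcal{T}(S)$, $d=d_T$, and $g$ in the extended mapping class group yields a continuous extension of the action to the horofunction compactification of $(\mathcal{T}(S),d_T)$.

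The final step is to transport this across the homeomorphism of Theorem~\ref{Thm:main}. Writing $\Phi$ for the explicit homeomorphism from the Gardiner--Masur compactification to the horofunction compactification constructed in Section~\ref{main}, which is compatible with the canonical inclusions of $\mathcal{T}(S)$ into the two compactifications, one defines the action on the Gardiner--Masur boundary by $g\cdot\xi:=\Phi^{-1}\bigl(g\cdot\Phi(\xi)\bigr)$; this is continuous and agrees with the original action on $\mathcal{T}(S)$, so it is the desired continuous extension.

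I do not anticipate a serious obstacle: the statement is soft once Theorem~\ref{Thm:main} is in hand, and the only points requiring some care are (i) that $\Phi$ genuinely intertwines the two inclusions of $\mathcal{T}(S)$, so that the conjugated action on the Gardiner--Masur compactification really extends the mapping class group action --- here the content of Theorem~\ref{Thm:main}, not merely an abstract compactness argument, is what is used --- and (ii) the routine verification of continuity of the horofunction action in the compact-open topology. Alternatively, one could argue directly from the Gardiner--Masur embedding $X\mapsto[\,c\mapsto\sqrt{\mathrm{Ext}_X(c)}\,]$ of $\mathcal{T}(S)$ into the projective space on the set $\mathcal{S}$ of homotopy classes of simple closed curves, using that the group permutes $\mathcal{S}$ and that $\mathrm{Ext}_{gX}(c)=\mathrm{Ext}_X(g^{-1}c)$, so that the continuous coordinate-permutation action on the ambient projective space preserves the closure of the image; but the horofunction argument is the one in the spirit of this paper.
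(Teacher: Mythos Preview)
Your proposal is correct and follows essentially the same route as the paper: first observe that the extended mapping class group acts by isometries on $(\mathcal{T}(S),d_T)$, then use the general fact (the paper's Proposition~\ref{pro:mod}) that isometries extend to homeomorphisms of the horofunction compactification via $(g\cdot\xi)(x)=\xi(g^{-1}x)-\xi(g^{-1}x_0)$, and finally conjugate by the homeomorphism of Theorem~\ref{Thm:main}. Your write-up is slightly more detailed on continuity and on why the conjugated action restricts correctly to $\mathcal{T}(S)$; the alternative direct argument you sketch at the end (via the permutation action on $\mathcal{S}$ and $\mathrm{Ext}_{gX}(c)=\mathrm{Ext}_X(g^{-1}c)$) does not appear in the paper but is a valid independent route.
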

\begin{corollary}\label{corollary2}
Every Teichm\"uller (almost-)geodesic ray converges in the forward direction to a point in the Gardiner-Masur boundary.

\end{corollary}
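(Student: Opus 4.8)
The strategy is to deduce the statement from Theorem~\ref{Thm:main} together with the general behaviour of (almost-)geodesic rays in the horofunction compactification of a proper geodesic metric space.

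First I would recall that $(\mathcal{T}(S),d_T)$ is a proper geodesic metric space: it is complete, its closed balls are compact, and, as noted in the introduction, any two points of $\mathcal{T}(S)$ are joined by a (unique) geodesic segment. Consequently the horofunction compactification is a genuine compactification of $\mathcal{T}(S)$, i.e.\ the embedding $\iota\colon z\mapsto d_T(z,\cdot)-d_T(z,x_0)$ (for a fixed base-point $x_0$) has open dense image; I will write $\partial_h\mathcal{T}(S)$ for the horofunction boundary.

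Next, for a genuine geodesic ray $\gamma\colon[0,\infty)\to\mathcal{T}(S)$ with $\gamma(0)=x_0$, the function $t\mapsto d_T(z,\gamma(t))-t$ is non-increasing in $t$ and bounded below by $-d_T(x_0,z)$, hence it converges, pointwise and then (by equicontinuity, each such function being $1$-Lipschitz) uniformly on compact sets, to the Busemann function $B_\gamma$; thus $\iota(\gamma(t))\to B_\gamma$ in the horofunction compactification, and since $d_T(x_0,\gamma(t))=t\to\infty$ the ray eventually leaves every compact set, so $B_\gamma\in\partial_h\mathcal{T}(S)$. For almost-geodesic rays (in the sense of Rieffel) the same conclusion holds: Rieffel's theorem guarantees that an almost-geodesic ray converges in the horofunction compactification, necessarily to a point of the boundary since it is unbounded; and an honest geodesic ray is in particular an almost-geodesic ray.

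Finally, let $\Phi$ be the homeomorphism from the Gardiner--Masur compactification to the horofunction compactification furnished by Theorem~\ref{Thm:main} (the explicit map announced after its statement). Since $\Phi$ restricts to the identity on $\mathcal{T}(S)$, it carries the Gardiner--Masur boundary homeomorphically onto $\partial_h\mathcal{T}(S)$. Hence, if $\gamma$ is an (almost-)geodesic ray, the previous paragraph gives $\gamma(t)\to B_\gamma\in\partial_h\mathcal{T}(S)$, and applying $\Phi^{-1}$ we conclude that $\gamma(t)$ converges in the Gardiner--Masur compactification to $\Phi^{-1}(B_\gamma)$, a point of the Gardiner--Masur boundary. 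The only points requiring care are that the homeomorphism of Theorem~\ref{Thm:main} genuinely extends the identity of $\mathcal{T}(S)$ (so that convergence of the \emph{same} sequence in the two compactifications corresponds) and the correct invocation of Rieffel's framework in the almost-geodesic case; the mathematical content of the corollary is otherwise entirely contained in Theorem~\ref{Thm:main}.
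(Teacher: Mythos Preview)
Your proposal is correct and is precisely the paper's own argument: Proposition~\ref{pro:geo} (Rieffel's result that every almost-geodesic ray converges to a point of the horofunction boundary) together with the homeomorphism of Theorem~\ref{Thm:main}, transported back to the Gardiner--Masur compactification exactly as you describe. The paper additionally reproduces, as an alternative, Miyachi's short direct proof showing that $\mathcal{E}_{r(t)}(\alpha)$ converges for every $\alpha\in\mathcal{S}$ without invoking Theorem~\ref{Thm:main}, but your route is the one the authors themselves rely on.
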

 Note that
Miyachi \cite{Miyachi} already gave a different proof of the fact that the action of the mapping class group on $\mathcal{T}(S)$ extends continuously to the
Gardiner-Masur boundary.

Miyachi \cite{Miyachi}, \cite{Miyachi2} also proved that if a Teichm\"uller geodesic ray is uniquely ergodic
or Strebel, then it converges to a limit point in the Gardiner-Masur boundary.
Recall that a Teichm\"uller geodesic ray  is uniquely ergodic or Strebel if the vertical measured foliation of the quadratic differential defining the geodesic ray is uniquely ergodic or Strebel (all of the vertical trajectoris are closed).
Miyachi \cite{Miyachi}, \cite{Miyachi2}  proved that if the Teichm\"uller geodesic ray is uniquely ergodic, the limit point in the Gardiner-Masur boundary is equal to the projective class of the vertical measured foliation (see Theorem \ref{thm:Miyachi} for the statement); and if
 the Teichm\"uller geodesic ray is Strebel, the limit point is determined by the vertical measured foliation and the conformal structure of the initial point of the geodesic ray.

It follows that the Gardiner-Masur compactification is natural and compatible with the Teichm\"uller metric,
although we do not know much of its
 geometric structure.

\bigskip
Boundary points of the horofunction compactification are called horofunctions. A horofunction is called
a Busemann point if it is a limit point of some almost-geodesic ray (see Section 3 for the definition of almost-geodesic ray).
Note that for a Hadamard manifold, i.e., a nonpositively curved and simply connected, the horofunction compatification is the same as the geodesic compactification (see \cite{BGP}). In particular, each
horofunction is a Busemann point.

Since the set of horofunctions of $(\mathcal{T}(S),d_T)$ is identified with the Gardiner-Masur boundary
(by Theorem \ref{Thm:main}), it is natural to ask
whether every point in the Gardiner-Masur boundary is an accumulation point of a Teichm\"uller almost-geodesic ray.

We already pointed out that when $3g-3+n=1$, the Teichm\"uller metric is isometric to the hyperbolic plane. In this case,
each horofunction is a Busemann point.
When $3g-3+n\geq 2$, non-Busemann points of the Gardiner-Masur boundary were recently constructed  by Miyachi \cite{Miyachi3}.
He proved that the
projective class of a maximal rational measured foliation cannot be the limit of any
almost geodesic ray in the Gardiner-Masur compactification.

Inspired by the results of Miyachi, one may asks the following questions:
\begin{questions}
\begin{enumerate}
\item Give a necessary and sufficient conditions for a point in the  Gardiner-Masur boundary to be a Busemann point.  Determine the limit point of a general Teichm\"uller geodesic ray in the Gardiner-Masur boundary from the conformal structure of its initial point and (the vertical measured foliation of) the quadratic differential determining the geodesic ray.
\item  Is the set of Busemann points dense in the Gardiner-Masur boundary? Furthermore, understand the geometric structure of
the Gardiner-Masur boundary.
\end{enumerate}
\end{questions}

\begin{remark}
Recently we learnt from Cormac Walsh that he has solved the first question.
\end{remark}
Our proof of Theorem \ref{Thm:main} is inspired by a recent result of Walsh \cite{Walsh}, which appears in this volume.  He proved  that
Thuston's compactification of $\mathcal{T}(S)$ is homeomorphic to the horofunction
compactification of $\mathcal{T}(S)$ endowed with Thurston's Lipschitz asymmetric metric.
We have felt for some years that the Teichm\"uller metric is natural for the Gardiner-Masur compactification in some sense, while Thurston's Lipschitz asymmetric metric is natural for Thurston's compactification. Now we know that the horofunction compactification  builds a bridge between them.

\textbf{Acknowledgements.}  We would like to thank
Athanase Papadopoulos for his useful
comments and corrections. We are also grateful to Hideki Miyachi for reading carefully  and for his help in the proof of Proposition \ref{pro:inj}.

\section{Preliminaries}

The Teichm\"uller space \index{Teichm\"uller space}
$\mathcal{T}(S)$ of $S$ is the space of complex structures (or complete, finite-area hyperbolic structures) $X$ on $S$
up to equivalence.
We say that two complex structures $X$ and $Y$ are equivalent if there is a conformal map $h:X \to Y$
homotopic to the identity map on $S$.

The Teichm\"uller metric \index{Teichm\"uller metric}   on $\mathcal{T}(S)$ is the metric defined by
$$d_T(X,Y):=\frac{1}{2} \inf_f \log K(f)$$
where $f:X \to Y$ is a
quasi-conformal map homotopic to the identity map of $S$ and

$$\underset{x\in X}{\mathrm{ess}-\sup} \  K_x(f)\geq 1$$

 is the quasi-conformal dilatation of $f$, where
 $$K_x(f)=\frac{|f_z(x)|+|f_{\bar z}(x)|}{|f_z(x)|-|f_{\bar z}(x)|}$$
 is the pointwise quasiconformal dilatation at the point $x\in X$ with local conformal
coordinate $z$.

Teichm\"uller's theorem states that given any $X,Y \in \mathcal{T}(S)$, there exists a unique
quasi-conformal map $f: X \to Y$, called the Teichm\"uller map, such that
$$d_T(X,Y)=\frac{1}{2} \log K(f).$$
The Beltrami differential $\mu:= \frac{\bar\partial f}{\partial f}$ of the Teichm\"uller map $f$ is of the form $\mu=k\frac{\bar q}{|q|}$
for some quadratic differential $q$ on $X$ and some constant $k$ with $0\leq k <1$.
In natural coordinates given by $q$ on $X$ and a quadratic differential $q'$ on $Y$, the Teichm\"uller map
$f$ is given by $f(x+iy)=K^{1/2}x+ i K^{-1/2}y$, where $K=K(f)=\frac{1+k}{1-k}$.

The Teichm\"uller metric is induced by a Finsler norm. Between any two points in $\mathcal{T}(S)$ there is precisely one geodesic.
A geodesic ray with initial point $X$ is given by the one-parameter family of Riemann surfaces $\{X_t\}_{t\geq 0}$,
where there is a holomorphic quadratic differential $q$ on $X$ and a $t$-family of
Teichm\"uller maps $f_t:X\to X_t$, with initial Beltrami differentials $\mu(f_t)=\frac{e^{2t}-1}{e^{2t}+1}\frac{\bar q}{|q|}$. Here  $\mu(f_t)$ is chosen such that the geodesic ray has arc-length parameter given by $t$.

\bigskip
Extremal length \index{extremal length}  is an important tool in the study of the Teichm\"uller metric. The notion is due to Ahlfors and Beurling (ref. \cite{AB}).
Recall that an \emph{essential}  simple closed curve on $S$ is a simple closed curve on $S$
which is neither homotopic to a point on $S$ nor homotopic to a puncture of $S$.
Let $\mathcal{S}$ be the set of homotopy classes of  essential simple closed curves on $S$.

Given a Riemann surface $X$, a conformal metric $\sigma$ on $X$ is a metric locally of the form
$\sigma(z)|dz|$ wher  $\sigma(z)\geq 0$ is a Borel measurable function.  We define the
$\sigma$-area  of $X$ by
$$ A(\sigma)=\int_X \sigma^2(z)|dz|^2 .$$
If $\alpha\in \mathcal{S}$, then the $\sigma$-length of $\alpha$ is defined by
$$L_\sigma(\alpha)=\inf_{\alpha' }\int_{\alpha'}\sigma(z)|dz|,$$
where the infimum is taken over all essential simple closed curves $\alpha' $ in the homotopy class of  $\alpha$.

With the above notation, we can define the extremal length of $\alpha$ on $X$  by
$$\mathrm{Ext}_X(\alpha)=\sup_\sigma \frac{L_\sigma^2(\alpha)}{A(\sigma)},$$
where  $\sigma(z)|dz|$ ranges over all conformal metrics on $X$  with
$0<A(\sigma)<\infty$.

The definition of extremal length only depends on the homotopy class of $X$ and
the homotopy class of $\alpha$. Fix $\alpha\in \mathcal{S}$, $\mathrm{Ext}_X(\alpha)$ defines a function on  Teichm\"uller space.
The following important formula is due to Kerckhoff \cite{Kerckhoff}.
\begin{theorem} Let $X, Y$ be any two points of $\mathcal {T}(S)$. Then
$$d_T(X,Y)=\frac{1}{2} \log \sup_{\alpha\in \mathcal{S}} {\frac{\mathrm{Ext}_X(\alpha)}{\mathrm{Ext}_Y(\alpha)}}.$$
\end{theorem}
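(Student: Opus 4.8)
\bigskip
\noindent\textbf{Proof strategy.}
The plan is to prove the two inequalities separately. The inequality $d_T(X,Y)\ge \frac12\log\sup_{\alpha\in\mathcal{S}}\mathrm{Ext}_X(\alpha)/\mathrm{Ext}_Y(\alpha)$ is the easier one and rests only on the quasi-invariance of extremal length. Given a $K$-quasiconformal map $f:X\to Y$ homotopic to the identity and a conformal metric $\rho$ on $Y$, a standard computation shows that a suitable Borel metric $\sigma$ on $X$ majorizing the pullback of $\rho$ satisfies $L_\sigma(\alpha)\ge L_\rho(\alpha)$ for every $\alpha\in\mathcal{S}$ and $A(\sigma)\le K\,A(\rho)$ (the pointwise distortion factor $\tfrac{(|f_z|+|f_{\bar z}|)^2}{|f_z|^2-|f_{\bar z}|^2}=K_z(f)$ being bounded by $K$). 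Inserting this into the definition of extremal length gives $\mathrm{Ext}_X(\alpha)\le K\,\mathrm{Ext}_Y(\alpha)$. Taking the infimum over all such $f$ yields $\mathrm{Ext}_X(\alpha)/\mathrm{Ext}_Y(\alpha)\le e^{2d_T(X,Y)}$ for every $\alpha$, and hence $\frac12\log\sup_{\alpha}\mathrm{Ext}_X(\alpha)/\mathrm{Ext}_Y(\alpha)\le d_T(X,Y)$.

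For the reverse inequality the plan is to exhibit weighted simple closed curves whose extremal-length ratio approaches $e^{2d_T(X,Y)}$. First I would pass to the space $\mathcal{MF}$ of measured foliations on $S$ and use the Hubbard--Masur theorem: every $F\in\mathcal{MF}$ is the horizontal foliation of a unique holomorphic quadratic differential $q(X,F)$ on $X$, and one has the identity $\mathrm{Ext}_X(F)=\|q(X,F)\|$, where $\|\cdot\|$ denotes the area norm. This extends $\alpha\mapsto\mathrm{Ext}_X(\alpha)$ to a function on $\mathcal{MF}$ that is continuous and satisfies $\mathrm{Ext}_X(tF)=t^2\mathrm{Ext}_X(F)$. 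Since weighted simple closed curves are dense in $\mathcal{MF}$ and the ratio $F\mapsto \mathrm{Ext}_X(F)/\mathrm{Ext}_Y(F)$ is continuous and invariant under rescaling of $F$, it follows that
$$\sup_{\alpha\in\mathcal{S}}\frac{\mathrm{Ext}_X(\alpha)}{\mathrm{Ext}_Y(\alpha)}=\sup_{F\in\mathcal{MF}}\frac{\mathrm{Ext}_X(F)}{\mathrm{Ext}_Y(F)}.$$

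It then suffices to produce a single $F$ realizing the value $e^{2d_T(X,Y)}$, and the natural candidate is the vertical foliation of the Teichm\"uller differential. Let $f:X\to Y$ be the Teichm\"uller map, with initial quadratic differential $q$ on $X$, terminal quadratic differential $q'$ on $Y$, and $K=K(f)=e^{2d_T(X,Y)}$. In natural coordinates $z=x+iy$ for $q$ and $w=u+iv$ for $q'$ the map has the form $w=K^{1/2}x+iK^{-1/2}y$, so that $\|q'\|=\|q\|$ and $f$ carries the vertical foliation $F_v(q)$ (transverse measure $|dx|$) to $K^{-1/2}F_v(q')$. Since $f$ is homotopic to the identity, $F_v(q)$ and its image represent the same class in $\mathcal{MF}$, so, using $\mathrm{Ext}_X(F_v(q))=\|q\|$, $\mathrm{Ext}_Y(F_v(q'))=\|q'\|$ and degree-$2$ homogeneity,
$$\mathrm{Ext}_Y(F_v(q))=\mathrm{Ext}_Y\big(K^{-1/2}F_v(q')\big)=K^{-1}\|q'\|=K^{-1}\|q\|=K^{-1}\,\mathrm{Ext}_X(F_v(q)).$$
Hence the supremum above is at least $K=e^{2d_T(X,Y)}$, which gives $d_T(X,Y)\le\frac12\log\sup_{\alpha}\mathrm{Ext}_X(\alpha)/\mathrm{Ext}_Y(\alpha)$ and completes the proof.

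I expect the main obstacle to be the analytic package underlying the second inequality: the Hubbard--Masur theorem and the identity $\mathrm{Ext}_X(F)=\|q(X,F)\|$, the continuity of $\mathrm{Ext}_X$ on $\mathcal{MF}$, and the density of weighted simple closed curves. Granting these inputs, what remains is a direct computation with the explicit form of the Teichm\"uller map. An alternative route that avoids $\mathcal{MF}$ altogether would approximate the vertical foliation of $q$ directly by simple closed curves built from long vertical $q$-segments joined by short horizontal jumps, estimating their extremal lengths on $X$ and on $Y$ from the flat metrics $|q|$ and $|q'|$; this is closer to Kerckhoff's original argument but the estimates require considerably more care.
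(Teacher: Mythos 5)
Your proof is correct in substance, but note that the paper itself offers no proof of this statement to compare against: it is quoted as Kerckhoff's theorem with a reference to \cite{Kerckhoff}, and the passage from $\mathcal{S}$ to $\mathcal{MF}$ that you establish along the way (continuous extension of extremal length, homogeneity $\mathrm{Ext}_X(tF)=t^2\mathrm{Ext}_X(F)$, density of weighted simple closed curves) is likewise only stated there, again with credit to Kerckhoff. What you give is the standard modern proof: quasi-invariance of extremal length under $K$-quasiconformal maps for the inequality $\sup_{\alpha}\mathrm{Ext}_X(\alpha)/\mathrm{Ext}_Y(\alpha)\le e^{2d_T(X,Y)}$, and then the vertical foliation of the Teichm\"uller differential, together with Hubbard--Masur and the identity $\mathrm{Ext}_X(F)=\|q(X,F)\|$, to realize the value $e^{2d_T(X,Y)}$ exactly; this trades Kerckhoff's original, more elementary approximation of the vertical foliation by simple closed curves (which you correctly identify as the alternative route) for heavier but standard analytic inputs, and your computation with $w=K^{1/2}x+iK^{-1/2}y$, $\|q'\|=\|q\|$, and $f_*F_v(q)=K^{-1/2}F_v(q')$ is accurate. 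One small repair in the first half: the construction you describe (a metric $\sigma$ on $X$ dominating the pullback under $f:X\to Y$ of a metric $\rho$ on $Y$, with $L_\sigma(\alpha)\ge L_\rho(\alpha)$ and $A(\sigma)\le K\,A(\rho)$) plugs into the definition to give $\mathrm{Ext}_X(\alpha)\ge K^{-1}\mathrm{Ext}_Y(\alpha)$, i.e.\ $\mathrm{Ext}_Y(\alpha)\le K\,\mathrm{Ext}_X(\alpha)$, not the inequality $\mathrm{Ext}_X(\alpha)\le K\,\mathrm{Ext}_Y(\alpha)$ you state; the one you need follows by running the identical computation with $f^{-1}$, which is also $K$-quasiconformal, so the conclusion stands after this trivial swap.
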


A \emph{measured foliation} \index{measured foliation}  on $S$ is a foliation (with a finite
number of singularities) with an invariant transverse measure. The
singularities which are allowed are  topologically  the same as
those that occur at $z=0$ in the line field defined by the quadratic form $z^{p-2}dz^2$. Two
measured foliations $\mu$ and $\mu'$ are
\emph{equivalent} if for any simple closed curve $\gamma$, the geometric
intersection number $i(\gamma,\mu)$ and $i(\gamma,\mu')$ equal. Denote by $\mathcal{MF}$
the space of equivalent classes of measured foliations.

There is a special class of measured foliations that
have the property that the complement of the critical leaves is
homeomorphic to a cylinder. The leaves of the foliation on the
cylinder are then all freely homotopic to a simple closed curve
$\gamma$. Such a foliation is completely determined as a point in
$\mathcal{MF}$ by the height $r$ of the cylinder
 and the isotopy class of $\gamma$. Denote such a foliation by $(\gamma, r)$ and call it a \emph{weighted simple closed curve}. Thurston \cite{Thurston}
 showed that $\mathcal{MF}$ is homeomorphic to a $(6g-6)$
dimensional ball and that there is an embedding $\mathcal{S}\times \mathrm{R}_+\rightarrow \mathcal{MF}$
whose image is dense in $\mathcal{MF}$.

By Kerckhoff \cite{Kerckhoff}, there is a unique
continuous extension of the extremal length function from $\mathcal{S}$ to $\mathcal{MF}$,
with $\mathrm{Ext}_X((\gamma, r))=r^2\mathrm{Ext}_X(\gamma)$. As a result,
the density of (weighted) simple closed curves in $\mathcal{MF}$  allows us to replace the right
hand side of Kerckhoff's formula by the supremum taken over all $\mu\in \mathcal{MF}$:
$$d_T(X,Y)=\frac{1}{2} \log \sup_{\mu\in \mathcal{MF}} {\frac{\mathrm{Ext}_X(\mu)}{\mathrm{Ext}_Y(\mu)}}.$$

Denote the space of projective measured foliations by $\mathcal{PMF}$.
We may identify $\mathcal{PMF}$ with the cross-section $\{\mu\in \mathcal{MF}\ | \ \mathrm{Ext}_{X_0}(\mu)=1\}$ (for any fixed point $X_0\in \mathcal{T}(S)$) and
write Kerckhoff's formula as
$$d_T(X,Y)=\frac{1}{2}\log \sup_{\mu\in \mathcal{PMF}} \frac{\mathrm{Ext}_Y(\mu)}{\mathrm{Ext}_X(\mu)}.$$
Since $\mathcal{PMF}$ is compact, the supremum is attained by some $\mu\in \mathcal{PMF}$.

\bigskip

A measured foliation $\mu$ is \emph{minimal}  if no closed curve in $S$ can be realized by leaves of $\mu$.
Equivalently, after Whitehead moves, the foliation has only dense leaves on $S$.
Two measured foliations $\mu$ and $\nu$ are  \emph{ topologically equivalent} if after Whitehead moves, the leaf structures are isotopic to each other.
A measured foliation $\mu$ is called \emph{ uniquely ergodic} \index{uniquely ergodic}  if  it is minimal and any measured foliation topologically
equivalent to  $\mu$ is measure equivalent to a positive multiple of $\mu$. The following lemma is proved in Masur \cite{Masur}.
\begin{lemma}\label{Lem:Masur}
Assume that $\mu \in \mathcal{MF}$ is  uniquely ergodic. If $ \nu\in \mathcal{MF}$ satisfies $i(\mu,\nu)=0$, then $\nu=c\mu$ for some constant $c\geq 0$.
\end{lemma}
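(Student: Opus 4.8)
This is Masur's lemma (see \cite{Masur}); the strategy is to promote the analytic condition $i(\mu,\nu)=0$ to the topological statement that $\nu$ is carried by the same foliation as $\mu$, and then to invoke unique ergodicity. First I would record that a minimal measured foliation \emph{fills} $S$: since $\mu$ has no closed leaves and, after Whitehead moves, its leaves are dense in $S$, its Whitehead\nobreakdash-reduced representative consists of a single arational component supported on all of $S$, whose complementary regions are (possibly once\nobreakdash-punctured) ideal polygons. In particular $i(\mu,\gamma)>0$ for every $\gamma\in\mathcal{S}$, and more generally $\mu$ has positive intersection with any $\lambda\in\mathcal{MF}\setminus\{0\}$ possessing a closed leaf.

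Next I would fix an auxiliary complete hyperbolic metric on $S$ and pass from $\mu$ and $\nu$ to the associated measured geodesic laminations $L_\mu$ and $L_\nu$. Since $i(\mu,\nu)$ is the total mass of the transverse intersection of $L_\mu$ and $L_\nu$, the hypothesis $i(\mu,\nu)=0$ means precisely that $L_\mu$ and $L_\nu$ have no transverse intersection, so that $L_\mu\cup L_\nu$ is again a geodesic lamination. I claim every leaf of $L_\nu$ lies in $L_\mu$: a leaf $\ell$ of $L_\nu$ disjoint from $L_\mu$ would be contained in a complementary region $R$ of $L_\mu$; it cannot be a simple closed geodesic, for then $i(\mu,\ell)=0$ against the filling property above, and if it is bi\nobreakdash-infinite it is forced to be asymptotic to the bi\nobreakdash-infinite boundary leaves of $R$ and hence to accumulate onto $L_\mu$, contradicting that $L_\nu$ is closed and disjoint from $L_\mu$. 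Therefore $L_\nu\subseteq L_\mu$; since $L_\mu$ is minimal it has no nonempty proper sublamination, so (assuming $\nu\neq 0$) $L_\nu=L_\mu$. Thus $\nu$ and $\mu$ have isotopic leaf structures, i.e.\ they are topologically equivalent, and by the very definition of unique ergodicity $\nu$ is measure equivalent to a positive multiple of $\mu$; the trivial case $\nu=0$ gives $c=0$. This proves $\nu=c\mu$ with $c\ge 0$.

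The main obstacle is the middle step — converting $i(\mu,\nu)=0$ into $L_\nu\subseteq L_\mu$. The geodesic\nobreakdash-lamination input used there (that a minimal filling lamination leaves no room for extra leaves in its complementary regions, the case analysis near cusps included, and that a uniquely ergodic lamination carries a unique transverse measure class) is classical but constitutes the real content; one could alternatively run the whole argument directly with measured foliations and Whitehead equivalence, at the cost of handling singular leaves and saddle connections by hand. Everything else — the structural description of a minimal foliation and the bilinearity/monotonicity properties of $i(\cdot,\cdot)$ — is routine.
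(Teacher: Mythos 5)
The paper itself does not prove this lemma: it is quoted directly from Masur \cite{Masur}, so there is no internal argument to compare yours against, and I can only judge your sketch on its own terms. Your route --- straighten $\mu,\nu$ to measured geodesic laminations $L_\mu,L_\nu$; use $i(\mu,\nu)=0$ to exclude transverse intersections; use minimality (dense leaves) to see that $L_\mu$ fills, so its complementary regions are ideal polygons with at most one puncture; deduce $L_\nu\subseteq L_\mu$, hence $L_\nu=L_\mu$ by minimality; conclude by unique ergodicity --- is the standard classical argument and is correct in outline.

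Two steps need repair before this is a proof. First, the contradiction you invoke for a stray leaf $\ell$ in a complementary region $R$, ``contradicting that $L_\nu$ is closed and disjoint from $L_\mu$'', is not available: disjointness of $L_\nu$ from $L_\mu$ was never a hypothesis, and it is in fact false, since the conclusion is $L_\nu=L_\mu$. What you actually obtain is that the closure of $\ell$, which lies in $L_\nu$, contains boundary leaves of $R$, i.e.\ leaves of $L_\mu$; by itself that contradicts nothing. To close the argument you need the structure theorem for measured laminations: a lamination supporting a transverse measure of full support has no such ``transient'' leaf --- equivalently, it is a finite disjoint union of minimal components. (Concretely: the minimal component of $L_\nu$ containing $\ell$ is the closure of $\ell$, which contains a leaf $g$ of $L_\mu$ and hence, $L_\mu$ being minimal, all of $L_\mu$; but then the closure of $g$ is $L_\mu$, a proper closed invariant subset of that component, contradicting its minimality.) Second, in the punctured case a complete simple geodesic in a once-punctured polygon region can exit through the cusp rather than through a spike; this case must be excluded (half-leaves entering cusps cannot lie in the support of a measured lamination in the class corresponding to $\mathcal{MF}$), not merely alluded to. With these two classical facts supplied, the remaining steps --- filling implies $i(\mu,\gamma)>0$ for all $\gamma\in\mathcal{S}$, $L_\nu=L_\mu$ implies topological equivalence, and unique ergodicity then gives $\nu=c\mu$ with $c\ge 0$ --- are fine.
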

Moreover, it follows from Thurston's theory that
uniquely ergodic measured foliations are dense in $\mathcal{MF}$. This follows from the fact that
the orbit of any element of $\mathcal{PMF}$ by the  mapping class group action is dense in this space.

\section{Compactifications of Teichm\"uller space}
Thurston introduced a compactification $\overline{\mathcal{T}(S)}^{Th}$ of $\mathcal{T}(S)$
such that the action of the mapping
class group on $\mathcal{T}(S)$
extends continuously to the boundary $\partial \overline{\mathcal{T}(S)}^{Th}$ of $\overline{\mathcal{T}(S)}^{Th}$. We recall
some of the fundamental results of Thurston as described in \cite{FLP}. Again
denote by $\mathcal{S}$ the homotopy classes of essential simple closed curves with the discrete
topology. Let $\mathbb{R}_+^{\mathcal{S}}$ be the set of nonnegative functions on $\mathcal{S}$ and let $P(\mathbb{R}_+^{\mathcal{S}})$ be the projective space of
$\mathbb{R}_+^{\mathcal{S}}$. Denote by $\pi: \mathbb{R}_+^{\mathcal{S}} \to P(\mathbb{R}_+^{\mathcal{S}})$
the natural projection. We endow $\mathbb{R}_+^{\mathcal{S}}$ with the product topology and $P(\mathbb{R}_+^{\mathcal{S}})$ with the quotient  topology.
There is a mapping $\tilde{\psi}$ from $\mathcal{T}(S)$ into $\mathbb{R}_+^{\mathcal{S}}$ which sends
$X$ to the function $\tilde{\psi}(X)$ defined by
$$\tilde{\psi}(X)(\alpha)= \mathrm{\ell}_X(\alpha)$$
for all $\alpha\in \mathcal{S}$, where $\mathrm{\ell}_X(\alpha)$ is the hyperbolic length of $\alpha$ on $X$.
Thurston showed that
$\psi=\pi \circ \tilde{\psi}: \mathcal{T}(S) \to P(\mathbb{R}_+^{\mathcal{S}})$ is an embedding and we call it Thurston's embedding.

There is also an embedding of $\mathcal{PMF}$  into $P(\mathbb{R}_+^{\mathcal{S}})$ which sends each projective class of
measured foliation $[\mu]$ to the projective class of the function
$$\gamma \to i(\mu,\gamma),$$
where $i(\mu, \gamma)$ is the geometric intersection number of the measured foliation $\mu$ with the homotopy class of simple closed curve $\gamma$.
Thurston showed that with these embeddings
the closure $\overline{\psi(\mathcal{T}(S))}$ of the image $\psi(\mathcal{T}(S))$ in $P(\mathbb{R}_+^{\mathcal{S}})$
 is homeomorphic to a real $(6g-6+2n)$-dimensional closed ball and $\mathcal{PMF}$ is  the boundary sphere of $\overline{\psi(\mathcal{T}(S))}$.  We let $\overline{\mathcal{T}(S)}^{Th}=\overline{\psi(\mathcal{T}(S))}$ and call it \emph{Thurston's compactification} of $\mathcal{T}(S)$. The complement $\partial \overline{\mathcal{T}(S)}^{Th}=\overline{\psi(\mathcal{T}(S))}-\psi(\mathcal{T}(S))$ is called  \emph{Thurston's boundary} \index{Thurston'scompactification}  of $\mathcal{T}(S)$.
We always identify $\partial \overline{\mathcal{T}(S)}^{Th}$ with $\mathcal{PMF}$ without referring to the embedding.

\bigskip
By replacing the hyperbolic length functions $\ell_X(\alpha)$ by the square root of extremal length functions, Gardiner and Masur \cite{GM} defined the  \emph{Gardiner-Masur compactification}  of
$\mathcal{T}(S)$ and the corresponding boundary  is called the \emph{Gardiner-Masur boundary}.

Now we give more details. Define a mapping $\tilde{\phi}$ from $\mathcal{T}(S)$ into $\mathbb{R}_+^{\mathcal{S}}$ by
$$\tilde{\phi}(X)(\alpha)= \mathrm{Ext}_X(\alpha)^{1/2}$$
for all $\alpha\in \mathcal{S}$. Let $P(\mathbb{R}_+^{\mathcal{S}})$ be as before the projective space of
$\mathbb{R}_+^{\mathcal{S}}$ and $\pi: \mathbb{R}_+^{\mathcal{S}} \to P(\mathbb{R}_+^{\mathcal{S}})$
be the natural projection. Gardiner and Masur \cite{GM} showed that
$\phi=\pi \circ \tilde{\phi}: \mathcal{T}(S) \to P(\mathbb{R}_+^{\mathcal{S}})$ is an embedding
and the closure $\overline{\phi(\mathcal{T}(S))}$ of the image $\phi(\mathcal{T}(S))$ in $P(\mathbb{R}_+^{\mathcal{S}})$ is compact.
Let $\overline{\mathcal{T}(S)}^{GM}=\overline{\phi(\mathcal{T}(S))}$. Then $\overline{\mathcal{T}(S)}^{GM}$ is called the \emph{Gardiner-Masur compactification}   of
$\mathcal{T}(S)$. The complement $\partial \overline{\mathcal{T}(S)}^{GM}=\overline{\phi(\mathcal{T}(S))}-\phi(\mathcal{T}(S))$ is called the
\emph{Gardiner-Masur boundary}   \index{Gardiner-Masur compactification}  of $\mathcal{T}(S)$.

Gardiner and Masur \cite{GM} also proved that $\partial \overline{\mathcal{T}(S)}^{Th}$ is a proper
subset of  $\partial \overline{\mathcal{T}(S)}^{GM}$.
For further investigations about  Thurston's boundary, the Gardiner-Masur boudary and their relations with
the Teichm\"uller geometry, we refer to Gardiner and Masur \cite{GM} and to recent works of Miyachi \cite{Miyachi}, \cite{Miyachi2}. We just recall the following  theorem that will be used later.

\begin{theorem}[Miyachi \cite{Miyachi2}]\label{thm:Miyachi}
Suppose that $r(t)$ is a Teichm\"uller geodesic ray defined by a quadratic differential $q$. If the vertical measured foliation $\mathcal{F}_v(q)$ of $q$ is uniquely ergodic, then $r(t)$ converges to a point on the Gardiner-Masur boundary and the limit is  equal to the projective class of $\mathcal{F}_v(q)$.
\end{theorem}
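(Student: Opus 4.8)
The plan is to reduce the statement to pointwise asymptotics of extremal length along the ray, and then to extract those asymptotics from the duality formula for extremal length, using unique ergodicity only through Masur's rigidity lemma (Lemma~\ref{Lem:Masur}). Fix the base point $X_0=r(0)$, normalise $\|q\|=1$, write $X_t=r(t)$, and set $a(\alpha):=i(\alpha,\mathcal{F}_v(q))$ for $\alpha\in\mathcal{S}$. Since $\mathcal{F}_v(q)$ is uniquely ergodic, hence minimal, Lemma~\ref{Lem:Masur} applied with a weighted simple closed curve shows $a(\alpha)>0$ for every $\alpha$. Because projective classes are unchanged by positive scaling and $d_T(X_0,X_t)=t$, we have $\phi(r(t))=\pi(\tilde\phi(X_t))=\pi(\mathcal{E}_t)$, where $\mathcal{E}_t(\alpha):=e^{-t}\,\mathrm{Ext}_{X_t}(\alpha)^{1/2}$. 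The boundary point $[\mathcal{F}_v(q)]\in\mathcal{PMF}=\partial\overline{\mathcal{T}(S)}^{Th}\subset\partial\overline{\mathcal{T}(S)}^{GM}$ is the projective class of the nowhere-vanishing function $\alpha\mapsto a(\alpha)$; hence it suffices to prove that $\mathcal{E}_t(\alpha)\to a(\alpha)$ as $t\to\infty$, for every $\alpha\in\mathcal{S}$.

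For the lower bound, let $q_t$ be the terminal quadratic differential of the Teichm\"uller map $f_t\colon X_0\to X_t$; in the natural coordinates of Section~2 one has $f_t(x+iy)=e^{t}x+ie^{-t}y$, so $\|q_t\|=1$, $\mathcal{F}_v(q_t)=e^{t}\mathcal{F}_v(q)$ and $\mathcal{F}_h(q_t)=e^{-t}\mathcal{F}_h(q)$. Feeding the flat metric of $q_t$ into the definition of extremal length --- it has area $\|q_t\|=1$, and every representative of $\alpha$ has $q_t$-length at least $i(\alpha,\mathcal{F}_v(q_t))$ --- yields $\mathrm{Ext}_{X_t}(\alpha)\ge i(\alpha,\mathcal{F}_v(q_t))^2=e^{2t}a(\alpha)^2$; equivalently this is Minsky's inequality $i(\mathcal{F},\mathcal{G})\le\mathrm{Ext}_Y(\mathcal{F})^{1/2}\mathrm{Ext}_Y(\mathcal{G})^{1/2}$ with $\mathcal{G}=\mathcal{F}_v(q_t)$, using the standard identity $\mathrm{Ext}_{X_t}(\mathcal{F}_v(q_t))=\|q_t\|=1$. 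Thus $\mathcal{E}_t(\alpha)\ge a(\alpha)$ for all $t$, and in particular $\liminf_{t\to\infty}\mathcal{E}_t(\alpha)\ge a(\alpha)$.

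For the matching upper bound I would use the duality formula (see \cite{GM})
$$\mathrm{Ext}_{X_t}(\alpha)^{1/2}\;=\;\sup_{\mathcal{G}}\,\frac{i(\alpha,\mathcal{G})}{\mathrm{Ext}_{X_t}(\mathcal{G})^{1/2}},$$
the supremum taken over $\mathcal{G}\in\mathcal{MF}\setminus\{0\}$, together with the two lower bounds $\mathrm{Ext}_{X_t}(\mathcal{G})^{1/2}\ge e^{t}\,i(\mathcal{G},\mathcal{F}_v(q))$ and $\mathrm{Ext}_{X_t}(\mathcal{G})^{1/2}\ge e^{-t}\,i(\mathcal{G},\mathcal{F}_h(q))$, obtained exactly as in the previous paragraph from the vertical and horizontal foliations of $q_t$ (using $\mathrm{Ext}_{X_t}(\mathcal{F}_v(q_t))=\mathrm{Ext}_{X_t}(\mathcal{F}_h(q_t))=\|q_t\|=1$). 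The supremum is scale invariant in $\mathcal{G}$, so normalise $\mathcal{G}$ to lie on a fixed compact cross-section of $\mathcal{MF}\setminus\{0\}$ (a copy of $\mathcal{PMF}$), and fix a neighbourhood $U$ of $[\mathcal{F}_v(q)]$ therein. By Lemma~\ref{Lem:Masur} the continuous function $\mathcal{G}\mapsto i(\mathcal{G},\mathcal{F}_v(q))$ on this compact set vanishes \emph{only} at $[\mathcal{F}_v(q)]$, so there is $\varepsilon_U>0$ with $i(\mathcal{G},\mathcal{F}_v(q))\ge\varepsilon_U$ whenever $[\mathcal{G}]\notin U$, and then $i(\alpha,\mathcal{G})/\mathrm{Ext}_{X_t}(\mathcal{G})^{1/2}\le C_\alpha/(e^{t}\varepsilon_U)\to 0$, where $C_\alpha$ bounds $i(\alpha,\cdot)$ on the cross-section. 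For $[\mathcal{G}]\in U$, since $i(\mathcal{F}_v(q),\mathcal{F}_h(q))=\|q\|=1$, the horizontal bound gives $i(\alpha,\mathcal{G})/\mathrm{Ext}_{X_t}(\mathcal{G})^{1/2}\le e^{t}\,i(\alpha,\mathcal{G})/i(\mathcal{G},\mathcal{F}_h(q))\le e^{t}(a(\alpha)+\delta(U))$, where $\delta(U)\to 0$ as $U$ shrinks, by joint continuity of the intersection pairing. For $t$ large the second estimate dominates, so $\mathrm{Ext}_{X_t}(\alpha)^{1/2}\le e^{t}(a(\alpha)+\delta(U))$, whence $\limsup_{t\to\infty}\mathcal{E}_t(\alpha)\le a(\alpha)+\delta(U)$; letting $U$ shrink gives $\limsup_{t\to\infty}\mathcal{E}_t(\alpha)\le a(\alpha)$.

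Combining the two bounds gives $\mathcal{E}_t(\alpha)\to a(\alpha)$ for every $\alpha\in\mathcal{S}$, so $\phi(r(t))=\pi(\mathcal{E}_t)$ converges in $\overline{\mathcal{T}(S)}^{GM}$ to the class of $\alpha\mapsto i(\alpha,\mathcal{F}_v(q))$, which is the point $[\mathcal{F}_v(q)]$. I expect the heart of the proof to be the upper bound, and specifically the estimate that the part of the supremum away from $[\mathcal{F}_v(q)]$ tends to $0$: this is the only place unique ergodicity enters, through Lemma~\ref{Lem:Masur}, and it genuinely fails without it --- for a Strebel quadratic differential $q$ the function $i(\cdot,\mathcal{F}_v(q))$ vanishes on a whole simplex of weighted multicurves rather than only at $[\mathcal{F}_v(q)]$, the stray part of the supremum survives, and the limit of $r(t)$ in $\partial\overline{\mathcal{T}(S)}^{GM}$ then depends on the conformal structure of $X_0$ instead of being $[\mathcal{F}_v(q)]$. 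The remaining inputs --- Minsky's inequality, the identities $\mathrm{Ext}_Y(\mathcal{F}_v(\psi))=i(\mathcal{F}_v(\psi),\mathcal{F}_h(\psi))=\|\psi\|$, the duality formula, and continuity of the intersection pairing --- are standard facts about extremal length and measured foliations.
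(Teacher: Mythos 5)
This statement is quoted in the paper as Miyachi's theorem (from \cite{Miyachi2}) and is not proved there, so there is no internal proof to compare against; judged on its own, your argument is correct and is essentially a self-contained reconstruction of the known Masur--Miyachi line of reasoning. The reduction to the pointwise asymptotics $e^{-t}\mathrm{Ext}_{r(t)}(\alpha)^{1/2}\to i(\alpha,\mathcal{F}_v(q))$ is the right move, the lower bound via the flat metric of the terminal differential (equivalently Minsky's inequality with $\mathrm{Ext}_{X_t}(\mathcal{F}_v(q_t))=\|q_t\|$) is fine, and the upper bound correctly isolates the only place unique ergodicity is needed: on the compact complement of a neighbourhood of $[\mathcal{F}_v(q)]$ in the cross-section, Lemma \ref{Lem:Masur} forces $i(\cdot,\mathcal{F}_v(q))\geq\varepsilon_U>0$, killing that part of the supremum in the Gardiner--Masur duality formula, while near $[\mathcal{F}_v(q)]$ the bound through $\mathcal{F}_h(q)$ works because $i(\mathcal{F}_v(q),\mathcal{F}_h(q))=\|q\|>0$ keeps the denominator bounded away from zero for $U$ small. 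The external inputs you use are all standard and partly already invoked in the paper: the duality formula is exactly the ``sharpness of Minsky's inequality'' (Theorem 5.1 of \cite{GM}) used in the proof of Proposition \ref{pro:inj}, Minsky's inequality is Lemma \ref{equ:Minsky}, and the identity $\mathrm{Ext}_Y(\mathcal{F}_v(\psi))=\|\psi\|$ is classical. Two small points worth a sentence each if you write this up: first, pointwise convergence of the normalised functions on $\mathcal{S}$ to the nowhere-vanishing function $i(\cdot,\mathcal{F}_v(q))$ gives convergence in $P(\mathbb{R}_+^{\mathcal{S}})$ because the projection is continuous off the zero function; second, to conclude the limit is a \emph{boundary} point you are implicitly using Gardiner--Masur's result that $\mathcal{PMF}$ embeds in $\partial\overline{\mathcal{T}(S)}^{GM}$ (equivalently, that $i(\mathcal{F}_v(q),\cdot)$ is not projectively an extremal-length function, which one can also see by evaluating at $\mu=\mathcal{F}_v(q)$); both facts are available in the paper, so neither is a gap.
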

Miyachi's theorem is an analogue of a theorem of Masur \cite{Masur} saying that if $\mathcal{F}_v(q)$  is uniquely ergodic, then  $r(t)$ converges to a point on Thurston's boundary and that the limit point is  equal to the projective class of $\mathcal{F}_v(q)$.

\bigskip

In the following, we will give the definition of horofunction compactification for a general metric space and
then use Theorem \ref{Thm:main} to explain Corollary \ref{corollary1} and Corollary \ref{corollary2}. The proof
of Theorem \ref{Thm:main} will be  postponed until Section \ref{main}.

Recall that for a locally compact geodesic metric space $(M,d)$, the horofunction compactification is defined
by Gromov \cite{Gromov} in the following way.
Fix a base point $x_0\in M$. To each $z\in M$ we assigned a function $\Psi_z: M \to \mathbb{R}$, defined by
\begin{equation}\label{Equation:Psi}
\Psi_z(x)=d(x,z)-d(x_0,z).
\end{equation}
Let $C(M)$ be the space of continuous functions on $M$ endowed with the  topology
of locally uniformly convergence on $M$.
Then the map $\Psi:M\to C(M), z \mapsto \Psi_z$ is an embedding from $M$ into $C(M)$.
The closure $\overline{\Psi(M)}$ of $\Psi(M)\subset C(M)$ is compact, and it is called the \emph{horofunction compatification}  \index{Horofunction compactification}  of $(M,d)$. The \emph{horofunction boundary} is defined to be
$$M(\infty)=\overline{\Psi(M)}-\Psi(M),$$
and its elements are called \emph{horofunctions}.

Note that here the definition of $M(\infty)$ depends on the choice of the
base point $x_0$. If one changes to an alternative base point $x_1$,
then the assignment of the new function $\Psi'_z$ is
related to $\Psi_z$ by $\Psi'_z(\cdot)=\Psi_z(\cdot)-\Psi_z(x_1)$. One can check that there is a natural
identification between $\overline{\Psi(M)}$ and $\overline{\Psi'(M)}$ and $M(\infty)$ is well-defined up to additive constants.

Let $\mathrm{Isom} (M, d)$ be the isometry group of the metric space $(M, d)$.

\begin{proposition}\label{pro:mod}
The action of the isometry group $\mathrm{Isom} (M, d)$  of $M$
extends continuously to a homeomorphism on the horofunction
compactification.
\end{proposition}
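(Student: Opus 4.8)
The plan is to show that every isometry $g\in\mathrm{Isom}(M,d)$ induces a homeomorphism of $C(M)$ which carries $\Psi(M)$ to itself, and hence carries the closure $\overline{\Psi(M)}$ to itself; then to check that the induced map on $\overline{\Psi(M)}$ is continuous and that the assignment $g\mapsto \bar g$ is a group homomorphism into the self-homeomorphisms of $\overline{\Psi(M)}$, so that each $\bar g$ has inverse $\overline{g^{-1}}$ and is itself a homeomorphism.

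First I would compute how $g$ acts on the basic horofunctions. For $z\in M$ one has
\begin{equation*}
\Psi_{g(z)}(x)=d(x,g(z))-d(x_0,g(z))=d(g^{-1}(x),z)-d(g^{-1}(x_0),z)=\Psi_z(g^{-1}(x))-\Psi_z(g^{-1}(x_0)),
\end{equation*}
using that $g$ is an isometry. This suggests defining, for an arbitrary $f\in C(M)$,
\begin{equation*}
(\bar g f)(x)=f(g^{-1}(x))-f(g^{-1}(x_0)).
\end{equation*}
Then $\bar g\,\Psi_z=\Psi_{g(z)}$, so $\bar g$ maps $\Psi(M)$ bijectively onto $\Psi(M)$. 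The map $f\mapsto f\circ g^{-1}$ is a homeomorphism of $C(M)$ for the topology of locally uniform convergence, because $g^{-1}$ is a homeomorphism of $M$ and maps compact sets to compact sets; subtracting the constant $f(g^{-1}(x_0))$ is continuous in $f$ (evaluation at a point is continuous for this topology) and is clearly reversible, so $\bar g: C(M)\to C(M)$ is a homeomorphism. One checks directly from the formula that $\overline{g_1 g_2}=\bar g_1\bar g_2$ and $\bar{\mathrm{id}}=\mathrm{id}$, so $\bar g$ is invertible with inverse $\overline{g^{-1}}$.

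Since $\bar g$ is a homeomorphism of $C(M)$ fixing the subset $\Psi(M)$ setwise, it also fixes its closure $\overline{\Psi(M)}=\overline{\Psi(M)}$ setwise, hence restricts to a homeomorphism of $\overline{\Psi(M)}$; this restriction agrees with $\Psi$-conjugation of $g$ on $\Psi(M)\cong M$, so it is the desired continuous extension of the action. Finally, by the remark in the excerpt that changing the base point alters horofunctions only by additive constants, the extension does not depend on the choice of $x_0$ (one can absorb the base-point change into the normalization term $f(g^{-1}(x_0))$). The only subtlety to watch, and the one place a little care is needed, is that the topology on $C(M)$ is \emph{locally} uniform convergence, so one must confirm that precomposition with the homeomorphism $g^{-1}$ really is continuous there — this is where local compactness of $M$ (so that the topology is generated by sup-seminorms over a cofinal family of compact sets, which $g^{-1}$ permutes up to the same cofinal family) is used. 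Everything else is a routine verification with the displayed formulas.
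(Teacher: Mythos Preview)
Your proposal is correct and follows essentially the same approach as the paper: both define the extension by the formula $(\bar g f)(x)=f(g^{-1}x)-f(g^{-1}x_0)$ and verify it via the isometry computation $\Psi_{g(z)}(x)=\Psi_z(g^{-1}x)-\Psi_z(g^{-1}x_0)$. Your version is in fact more complete than the paper's sketch---you work on all of $C(M)$ and restrict to the closure, check the group-homomorphism property, and flag the role of local compactness---whereas the paper defines the action directly on horofunctions and leaves the homeomorphism verification to the reader.
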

\begin{proof}
For any isometry $g \in \mathrm{Isom} (M, d)$ and
any horofunction $\xi \in M(\infty)$, we define $g \cdot \xi\in C(M)$ to be
$$ (g \cdot \xi)(x)=\xi (g^{-1}\cdot x)-\xi (g^{-1} \cdot x_0).$$
To see that $g \cdot \xi$ is well-defined, assume that $x_n\in M$ converges to $\xi$, then
\begin{eqnarray*}
\lim_{n\to\infty}\Psi_{g\cdot x_n}(x) &=& \lim_{n\to\infty} d(x,g\cdot x_n)-d(x_0,g\cdot x_n)  \\
&=& \lim_{n\to\infty} d(g^{-1}\cdot x, x_n)-d(g^{-1}\cdot x_0, x_n) \\
&=& \xi (g^{-1}\cdot x)-\xi (g^{-1} \cdot x_0).
\end{eqnarray*}
It is easy to check that $g$ induces a homeomorphism from the horofunction
compatification with base-point $x_0$ onto the horofunction
compatification with base-point $g^{-1}\cdot x_0$.

\end{proof}

Let $(\mathcal{T}(S),d_T)$ be the Teichm\"uller space endowed with the Teichm\"uller metric.
Suppose that there is a homeomorphism $\Psi$ between the Gardiner-Masur compactification $\overline{\mathcal{T}(S)}^{GM}$ and
the horofunction compactification $\overline{\Psi(\mathcal{T}(S))}$ (This is the statement of Theorem \ref{Thm:main}).

By Proposition \ref{pro:mod}, any element $g$ of the isometry group $\mathrm{Isom} (\mathcal{T}(S),d_T)$ extends
continuously to a self-homeomorphism of $\overline{\Psi(\mathcal{T}(S))}$. It is easy to see that
the action of $g$ on $\mathcal{T}(S)$ is the same as the action of $\Psi^{-1}\circ g\circ \Psi$. As a result,
we have:
\begin{corollary}\label{corollary3}
The action of $\mathrm{Isom} (\mathcal{T}(S),d_T)$ on $\mathcal{T}(S)$ extends continuously to a homeomorphism on the Gardiner-Masur compactification.
\end{corollary}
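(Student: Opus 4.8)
The plan is to obtain this purely formally by combining Theorem \ref{Thm:main} with Proposition \ref{pro:mod}: Proposition \ref{pro:mod} already provides an action by homeomorphisms on the horofunction compactification, and Theorem \ref{Thm:main} lets us transport that action to the Gardiner-Masur compactification. No further analysis of the Teichm\"uller metric or of extremal length is required.

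In detail, I would argue as follows. Fix $g\in\mathrm{Isom}(\mathcal{T}(S),d_T)$. By Proposition \ref{pro:mod}, $g$ extends to a homeomorphism $\bar g$ of the horofunction compactification $\overline{\Psi(\mathcal{T}(S))}$, namely $(\bar g\cdot\xi)(x)=\xi(g^{-1}\cdot x)-\xi(g^{-1}\cdot x_0)$ on horofunctions $\xi$, with $\bar g=g$ on $\Psi(\mathcal{T}(S))$. Let $\Phi\colon\overline{\mathcal{T}(S)}^{GM}\to\overline{\Psi(\mathcal{T}(S))}$ be the homeomorphism of Theorem \ref{Thm:main}, and note that the explicit construction of $\Phi$ carried out in Section \ref{main} makes it compatible with the two inclusions of $\mathcal{T}(S)$, i.e. $\Phi$ restricts on the dense subset $\mathcal{T}(S)$ to the canonical homeomorphism $\phi(X)\mapsto\Psi(X)$ between the two copies of $\mathcal{T}(S)$. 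Setting $h_g:=\Phi^{-1}\circ\bar g\circ\Phi$, we get a homeomorphism of $\overline{\mathcal{T}(S)}^{GM}$ which, on the dense open subset $\mathcal{T}(S)$, equals $\Phi^{-1}\circ g\circ\Phi=g$. Thus $h_g$ is a continuous extension of the action of $g$, and it carries the Gardiner-Masur boundary homeomorphically onto itself; in particular this also recovers Corollary \ref{corollary1}, since the extended mapping class group acts on $\mathcal{T}(S)$ by isometries of $d_T$.

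To pass from ``each $g$ extends'' to ``the action extends'', I would use that $\mathcal{T}(S)$ is dense in the compact Hausdorff space $\overline{\mathcal{T}(S)}^{GM}$, so a continuous map $\mathcal{T}(S)\to\overline{\mathcal{T}(S)}^{GM}$ admits at most one continuous extension; this forces $h_{g}\circ h_{g'}=h_{gg'}$ and $h_{\mathrm{id}}=\mathrm{id}$, so $g\mapsto h_g$ is a homomorphism into the group of self-homeomorphisms of $\overline{\mathcal{T}(S)}^{GM}$ extending the given action. The single non-formal point in the whole argument is the compatibility of $\Phi$ with the inclusions of $\mathcal{T}(S)$: in general a homeomorphism between two compactifications of a space need not restrict to the identity on the space, so one must verify this from the explicit formula for $\Phi$ produced in Section \ref{main}. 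Everything else is the standard observation that conjugating a boundary homeomorphism by an equivalence of compactifications again yields a boundary homeomorphism.
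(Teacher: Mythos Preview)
Your proposal is correct and follows essentially the same route as the paper: use Proposition~\ref{pro:mod} to get the extension on the horofunction compactification, then conjugate by the homeomorphism $\Psi$ of Theorem~\ref{Thm:main} to transport it to $\overline{\mathcal{T}(S)}^{GM}$. You are in fact more careful than the paper in two places: you make explicit the need for $\Psi$ to be compatible with the two inclusions of $\mathcal{T}(S)$ (which indeed follows from the explicit formula in Section~\ref{main}), and you verify by a density/uniqueness argument that $g\mapsto h_g$ is a homomorphism, a point the paper leaves implicit.
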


Note that the extended mapping class group acts isometrically on the Teichm\"uller space with
the Teichm\"uller metric. In fact, a famous theorem of Royden (generalized by Earle-Kra  \cite{EK}) shows that, if $S$ is neither a sphere
with $\leq 4$ punctures or a torus with $\leq 2$ punctures, then the extended mapping class group
is precisely the group of isometries  of $\mathcal{T}(S)$
with the Teichm\"uller metric (modulo its center if $S$ is closed of
genus $2$). Corollay \ref{corollary1} follows from Corollary \ref{corollary3}.

\bigskip
Recall that a \emph{geodesic ray} in a metric space $(M,d)$ is an embedding $\gamma$ from the interval  $[0,\infty)$  to $M$ such that
$$d(\gamma(s), \gamma(t))=t-s,$$
for all $s,t \in [0,\infty)$, with $s<t$.

A map $\gamma : T \to M$, where $T$ is an unbounded subset of $\mathbb{R}_+$ containing $0$, is called an \emph{almost-geodesic ray} if for any $\epsilon >0$ there exists a $m \ge 0$ such that
$$| d(\gamma(0), \gamma(s)) + d(\gamma(s), \gamma (t)) -t| <\epsilon $$
for all $s, t \in T$ with $m\le s \le t$.
The definition of almost-geodesic ray is due to Rieffel \cite{Rieffel}.

Rieffel \cite{Rieffel} proved the following result:
\begin{proposition}\label{pro:geo}
Let $(M,d)$ be a metric space.
Every almost-geodesic ray of $(M,d)$ converges to a point in $M(\infty)$.
\end{proposition}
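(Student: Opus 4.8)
The plan is to show that $\Psi_{\gamma(t)}$ converges in $C(M)$ as $t\to\infty$ (with $t$ ranging over the unbounded set $T$), which, since $\gamma(t)$ leaves every bounded set, forces the limit to lie in $M(\infty)$. First I would fix $x\in M$ and study the net $t\mapsto \Psi_{\gamma(t)}(x) = d(x,\gamma(t)) - d(x_0,\gamma(t))$. The almost-geodesic condition gives, for every $\epsilon>0$, an $m$ such that $|d(\gamma(0),\gamma(s)) + d(\gamma(s),\gamma(t)) - t| < \epsilon$ for all $s,t\in T$ with $m\le s\le t$; in particular $d(x_0,\gamma(t))$ is, up to $\epsilon$, equal to $t - d(\gamma(0),\gamma(s)) + d(x_0,\gamma(0))$ once we also control the difference between $d(x_0,\gamma(t))$ and $d(\gamma(0),\gamma(t))$ via the triangle inequality (the discrepancy is bounded by $d(x_0,\gamma(0))$, a constant). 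The key point is that along an almost-geodesic the quantity $t - d(\gamma(0),\gamma(t))$ is Cauchy: taking $s\le t$ both large, $d(\gamma(0),\gamma(t)) \ge d(\gamma(0),\gamma(s)) + d(\gamma(s),\gamma(t)) - \epsilon \ge d(\gamma(0),\gamma(t)) - $ (small), and combined with the triangle inequality in the other direction this pins down $d(\gamma(0),\gamma(s)) + d(\gamma(s),\gamma(t)) - d(\gamma(0),\gamma(t))$ to within $\epsilon$, so $s\mapsto d(\gamma(0),\gamma(s)) - s$ has a limit, call it $-L$ where actually $d(\gamma(0),\gamma(s)) - s\to 0$ up to an additive constant absorbed by the normalization.

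Next I would establish pointwise convergence of $\Psi_{\gamma(t)}(x)$ for each fixed $x$. Write $\Psi_{\gamma(t)}(x) = \bigl(d(x,\gamma(t)) - d(\gamma(s),\gamma(t))\bigr) + \bigl(d(\gamma(s),\gamma(t)) - t\bigr) + \bigl(t - d(x_0,\gamma(t))\bigr)$. For fixed $s$ and $x$, as $t\to\infty$ the first bracket is controlled: $|d(x,\gamma(t)) - d(\gamma(s),\gamma(t))| \le d(x,\gamma(s))$ always, and more importantly one shows it actually converges, because $d(x,\gamma(t))$ and $d(\gamma(s),\gamma(t))$ both differ from $t$ by quantities that stabilize — using the almost-geodesic estimate applied with base point $\gamma(s)$ for $s$ large, any point $\gamma(t)$ far out satisfies $d(\gamma(s),\gamma(t)) \approx t - $ const, so $d(x,\gamma(t)) - d(\gamma(s),\gamma(t)) \to d(x,\gamma(0)) - d(x_0,\gamma(0)) + (\text{correction in }s)$; sending $s\to\infty$ afterwards and using the Cauchy property from the first paragraph makes the limit well-defined. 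The cleanest bookkeeping is: show that for every $\epsilon$ there is $m$ so that for all $s,t\in T$ with $m\le s\le t$ we have $|\Psi_{\gamma(s)}(x) - \Psi_{\gamma(t)}(x)| < C\epsilon$ with $C$ independent of $x$ — this is where the triangle inequality $d(x,\gamma(t)) \le d(x,\gamma(s)) + d(\gamma(s),\gamma(t))$ together with the near-equality $d(\gamma(0),\gamma(s)) + d(\gamma(s),\gamma(t)) \approx t$ and its counterpart with $x$ in place of $\gamma(0)$ does the work.

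Then I would upgrade pointwise convergence to convergence in $C(M)$, i.e. locally uniform convergence. This is almost automatic: each $\Psi_z$ is $1$-Lipschitz (from $|\Psi_z(x) - \Psi_z(y)| = |d(x,z) - d(y,z)| \le d(x,y)$), so the family $\{\Psi_{\gamma(t)}\}$ is equi-Lipschitz, hence equicontinuous; combined with pointwise convergence on the (dense, since $M$ is a length space) set, the Arzel\`a–Ascoli theorem gives that the convergence is uniform on compact sets. Therefore $\Psi_{\gamma(t)}$ converges in $C(M)$ to some $\xi$. Since $\gamma(t)$ escapes every compact set (as $d(x_0,\gamma(t))\to\infty$, which follows from the almost-geodesic condition), $\xi$ cannot equal any $\Psi_z$, so $\xi\in M(\infty)$; finally $\gamma(t)\to\xi$ in $\overline{\Psi(M)}$ by definition of the topology. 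The main obstacle I anticipate is the careful epsilon-management in showing the Cauchy estimate $|\Psi_{\gamma(s)}(x) - \Psi_{\gamma(t)}(x)|$ is small uniformly in $x$: one must chain two applications of the almost-geodesic inequality (once to relate $d(\gamma(s),\gamma(t))$ to $t$, once to relate $d(\gamma(0),\gamma(s))$ to $s$) and combine them with triangle inequalities without losing uniformity in $x$, and one must handle the fact that $T$ need only be an unbounded subset of $\mathbb{R}_+$, so "net" rather than "sequence" language (or a diagonal argument over an increasing sequence in $T$) is needed throughout.
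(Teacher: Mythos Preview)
The paper does not actually prove this proposition: it is stated as a result of Rieffel and cited without argument. The nearest thing in the paper is Miyachi's proof of Corollary~\ref{corollary2} (the special case $(M,d)=(\mathcal{T}(S),d_T)$ combined with Theorem~\ref{Thm:main}), and that argument is genuinely different from yours. Miyachi works entirely inside the Gardiner--Masur model: he sets $\mathcal{E}_{r(t)}(\alpha)=\mathrm{Ext}_{r(t)}(\alpha)^{1/2}/K_{r(t)}^{1/2}$, uses Kerckhoff's formula together with the almost-geodesic estimate to obtain $\mathcal{E}_{r(t)}(\alpha)\le e^{2\epsilon}\mathcal{E}_{r(s)}(\alpha)$ for $m\le s\le t$, and concludes $\limsup_t\mathcal{E}_{r(t)}(\alpha)\le e^{2\epsilon}\liminf_t\mathcal{E}_{r(t)}(\alpha)$ for every $\epsilon$, hence the limit exists for each $\alpha\in\mathcal{S}$. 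That argument exploits the concrete description of the Gardiner--Masur boundary and never passes through the horofunction picture.

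Your approach is the standard general-metric-space proof and is correct in outline, but two of your intermediate claims are looser than necessary. The clean pointwise step is: for fixed $x$, the function $t\mapsto d(x,\gamma(t))-t$ is bounded below (by $-d(x,\gamma(0))-\epsilon$, via the triangle inequality and $|d(\gamma(0),\gamma(t))-t|<\epsilon$) and ``almost non-increasing'' in the sense that $d(x,\gamma(t))-t\le d(x,\gamma(s))+d(\gamma(s),\gamma(t))-t< d(x,\gamma(s))-s+2\epsilon$, so its limit exists; apply this to $x$ and to $x_0$ separately. Your stronger assertion that the Cauchy bound on $\Psi_{\gamma(s)}(x)-\Psi_{\gamma(t)}(x)$ holds with a constant independent of $x$ is not obviously true and is in any case unnecessary: pointwise convergence of a $1$-Lipschitz net is automatically uniform on compacta, without invoking Arzel\`a--Ascoli. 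Finally, concluding $\xi\in M(\infty)$ rather than merely $\xi\in\overline{\Psi(M)}$ uses that $\Psi(M)$ is open in its closure, which is where the paper's standing local-compactness hypothesis enters.
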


Now Corollary \ref{corollary2}, the convergence of  Teichm\"uller almost-geodesic rays to the Gardiner-Masur boundary, follows from Rieffel's result and Theorem \ref{Thm:main}.
We learned after finishing this manuscript that Miyachi \cite{Miyachi3} gave a new  proof of Corollary \ref{corollary2}. Since his proof is short and intrinsic, we state it here for convenience of the readers.
\begin{proof}[Proof of Corollary \ref{corollary2}]
Let $\gamma : T \to \mathcal{T}(S)$ be a Teichm\"uller almost-geodesic ray.  For any $\epsilon >0$, there exists an $m \ge 0$ such that
$$| d_T(\gamma(0), \gamma(s)) + d_T(\gamma(s), \gamma (t)) -t| <\epsilon $$
for all $s, t \in T$ with $m\le s \le t$. In particular, taking $s=t \geq m$ in $T$, we have
$$| d_T(\gamma(0), \gamma(t)) -t| <\epsilon.$$
Let  $\log K_{r(t)}= 2d_T(\gamma(0), \gamma(t))$; then the above inequality is equivalent to

\begin{equation}\label{equ:almost}
 e^{t-\epsilon} \leq K_{r(t)}^{1/2}\leq e^{t+\epsilon} .
\end{equation}

For any $ m\leq s<t$ in $T$, by Kerckhoff's distance formula,
$$\sup_{\alpha\in \mathcal{S}} \frac{Ext_{r(t)}^{1/2}(\mu)}{Ext_{r(s)}^{1/2}(\mu)}=e^{d_T(r(s),r(t))}=e^{t-s}.$$
As a result, for any $\alpha\in \mathcal{S}$,
$$\frac{Ext_{r(t)}^{1/2}(\alpha)}{ K_{r(t)}^{1/2}}/ \frac{Ext_{r(s)}^{1/2}(\alpha)}{ K_{r(s)}^{1/2}}=\frac{Ext_{r(t)}^{1/2}(\mu)}{Ext_{r(s)}^{1/2}(\mu)} \cdot \frac{K_{r(s)}^{1/2}}{K_{r(t)}^{1/2}}\leq e^{t-s} \frac{K_{r(s)}^{1/2}}{K_{r(t)}^{1/2}}.$$
By $ (\ref{equ:almost})$, we have
$$\frac{Ext_{r(t)}^{1/2}(\alpha)}{ K_{r(t)}^{1/2}} \leq e^{2\epsilon} \frac{Ext_{r(s)}^{1/2}(\alpha)}{ K_{r(s)}^{1/2}}.$$

Let $\mathcal{E}_{r(t)}(\alpha)=\frac{\mathrm{Ext}_{r(t)}(\alpha)^{1/2}}{K_{r(t)}^{1/2}}$; then
\begin{equation}\label{equ:dis}
\mathcal{E}_{r(t)}(\alpha) \leq e^{2\epsilon} \mathcal{E}_{r(s)}(\alpha).
\end{equation}

It follows that $\mathcal{E}_{r(t)}(\alpha)$ is a bounded function of $t$. We set
$$\mathcal{E}(\alpha)= \lim\inf_{t \in T} \mathcal{E}_{r(t)}(\alpha).$$

Consider the supremum limit $ \lim\sup_{t \in T} \mathcal{E}_{r(t)}(\alpha)< \infty$.
By $(\ref{equ:dis})$, we have
$$\lim\sup_{t \in T} \mathcal{E}_{r(t)}(\alpha) \leq e^{2\epsilon} \lim\inf_{t \in T} \mathcal{E}_{r(t)}(\alpha).$$
Since $\epsilon$ can be chosen arbitrary small, we have
$$\lim\sup_{t \in T} \mathcal{E}_{r(t)}(\alpha) =\lim\inf_{t \in T} \mathcal{E}_{r(t)}(\alpha).$$
As a result, $\mathcal{E}(\alpha)$ is actually  the limit of $\mathcal{E}_{r(t)}(\alpha)$.

Note that the above argument is uniformly in $\alpha\in \mathcal{S}$, by the definition of the Gardiner-Masur compactification, $\gamma(t)$ converges to a limit in the Gardiner-Masur boundary.
\end{proof}
The  function $\mathcal{E}_{r(t)}(\alpha)$ is important for understanding the geometry of the Gardiner-Masur compactification. Note that if $r(t)$ is a Teichm\"uller geodesic ray,  then
$\mathcal{E}_{r(t)}(\alpha)$ is a discreasing function of $t$. We will see in next section that this function is generalized by Miyachi \cite{Miyachi}  to give a representation of each point in the Gardiner-Masur boundary.

\section{Convergence in the Gardiner-Masur compacfitication}\label{sec:3}

In the following, when we say that
a sequence $P_n\in \overline{\mathcal{T}(S)}^{GM}$ converges to $P\in \overline{\mathcal{T}(S)}^{GM}$,
we always refer to the convergence in the sense of the Gardiner-Masur compactification.

Fix a point $X_0 \in \mathcal{T}(S)$ as the base-point of the horofunction compactification.
For any $X\in \mathcal{T}(S)$, denote by $K_X$  the dilatation of
the Teichm\"uller map between $X_0$ and $X$. Note that $d_T(X_0,X)=\frac{1}{2}\log K_X$
and $K_X=\sup_{\mu\in \mathcal{MF}}\frac{\mathrm{Ext}_X(\mu)^{1/2}}{\mathrm{Ext}_{X_0}(\mu)^{1/2}}$ (here we set the ratio to be $1$ when $\mu=0$).
Consider the following function defined on $\mathcal{MF}$:
\begin{equation}\label{Equation:ext}
\mathcal{E}_X(\mu)=\frac{\mathrm{Ext}_X(\mu)^{1/2}}{K_X^{1/2}}, \mu \in\mathcal{MF}.
\end{equation}

The functions defined in $(\ref{Equation:ext})$ are due to Miyachi \cite{Miyachi} and they
can be continuously extended to the Gardiner-Masur boundary.
They play a role analogous to the intersection numbers $i(\mu, \cdot)$ in Thurston's compactification, as we can see in
 the following lemma.
\begin{lemma}[Miyachi \cite{Miyachi}] \label{Lemma:Miyachi}
For any $P\in \partial\overline{\mathcal{T}(S)}^{GM}$, there is a non-negative
continuous function $\mathcal{E}_P(\cdot)$ defined on $\mathcal{MF}$, such that
\begin{enumerate}[(i)]
\item \label{m1}  $$\mathcal{E}_P(t\mu)=t\mathcal{E}_P(\mu)$$ for  any $t>0$ and
$\mu\in \mathcal{MF}$, and

\item \label{m2} There is an injective map from  $\partial\overline{\mathcal{T}(S)}^{GM}$ to $P(\mathbb{R}_+^{\mathcal{S}})$: each $P\in\partial\overline{\mathcal{T}(S)}^{GM}$ corresponds to the projective class of the  function $\mathcal{S} \ni \alpha \mapsto \mathcal{E}_P(\alpha)$.

\item \label{m3} The function $\mathcal{E}_P(\cdot)$ is unique up to multiplication by a positive constant in the following
sense: for any sequence $(X_n)$ in $\mathcal{T}(S)$ converging to $P \in \overline{\mathcal{T}(S)}^{GM}$, there exists a subsequence $(X_{n_j})$ such that
$\mathcal{E}_{X_{n_j}}(\cdot)$ converges to a positive multiple of $\mathcal{E}_P(\cdot)$ uniformly on any compact subsets of
$\mathcal{MF}$. In particular,
$$\lim_{n\to \infty}\frac{\mathrm{Ext}_{X_n}(\mu)^{1/2}}{\mathrm{Ext}_{X_n}(\nu)^{1/2}}=\frac{\mathcal{E}_P(\mu)}{\mathcal{E}_P(\nu)}$$
for all $\mu, \nu\in \mathcal{MF}$ with $\mathcal{E}_P(\nu)\neq 0$.
\end{enumerate}
\end{lemma}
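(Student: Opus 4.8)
The plan is to extract the functions $\mathcal{E}_P(\cdot)$ as limits of $\mathcal{E}_{X_n}(\cdot)$ along sequences $X_n \to P$ and to verify that the construction is well-defined up to a positive multiplicative constant. First I would recall the key boundedness estimate: for any $X \in \mathcal{T}(S)$ and any $\mu \in \mathcal{MF}$, the definition $\mathcal{E}_X(\mu) = \mathrm{Ext}_X(\mu)^{1/2}/K_X^{1/2}$ together with $K_X = \sup_{\nu}\mathrm{Ext}_X(\nu)^{1/2}/\mathrm{Ext}_{X_0}(\nu)^{1/2}$ gives $\mathcal{E}_X(\mu) \leq \mathrm{Ext}_{X_0}(\mu)^{1/2}$. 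Moreover, Kerckhoff's formula yields a two-sided comparison: for $X, Y \in \mathcal{T}(S)$ one has $e^{-2d_T(X_0,X)}\,\mathrm{Ext}_X(\mu) \leq \mathrm{Ext}_{X_0}(\mu)\,e^{2d_T(X_0,\cdot)}$-type bounds, so that the family $\{\mathcal{E}_X(\cdot)\}_{X}$ is uniformly bounded on sets where $\mathrm{Ext}_{X_0}$ is bounded, hence on compact subsets of $\mathcal{MF}$ (using $\mathcal{PMF}$ compact and the homogeneity $\mathrm{Ext}_X(t\mu) = t^2\mathrm{Ext}_X(\mu)$). Since $\mu \mapsto \mathrm{Ext}_X(\mu)^{1/2}$ is continuous on $\mathcal{MF}$ and the bound $|\mathrm{Ext}_X(\mu)^{1/2} - \mathrm{Ext}_X(\nu)^{1/2}|$ can be controlled uniformly in $X$ (this is where one uses that square roots of extremal lengths are, after normalization, $1$-Lipschitz-type in a suitable metric on $\mathcal{MF}$, cf.\ Gardiner--Masur), the family is also equicontinuous on compacta. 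Arzel\`a--Ascoli then produces, for any sequence $X_n \to P$, a subsequence along which $\mathcal{E}_{X_n}(\cdot)$ converges uniformly on compact subsets of $\mathcal{MF}$ to a limit function; call it $\mathcal{E}_P(\cdot)$.

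Next I would check the three assertions. For (\ref{m1}), homogeneity of degree $1$ is immediate from $\mathcal{E}_X(t\mu) = t\,\mathcal{E}_X(\mu)$ (since $\mathrm{Ext}_X(t\mu)^{1/2} = t\,\mathrm{Ext}_X(\mu)^{1/2}$), and this passes to the limit. For (\ref{m3}), the main point is that the limit function is independent of the subsequence up to a positive scalar: this is exactly the content of the Gardiner--Masur embedding. Indeed, if $X_n \to P$ in $\overline{\mathcal{T}(S)}^{GM}$, then by definition $[\alpha \mapsto \mathrm{Ext}_{X_n}(\alpha)^{1/2}]$ converges in $P(\mathbb{R}_+^{\mathcal{S}})$ to a projective class depending only on $P$; since $\mathcal{E}_{X_n}(\alpha)$ differs from $\mathrm{Ext}_{X_n}(\alpha)^{1/2}$ only by the positive scalar $K_{X_n}^{-1/2}$, any two subsequential limits of $\mathcal{E}_{X_n}(\cdot)$ restricted to $\mathcal{S}$ are positive multiples of each other; by continuity on $\mathcal{MF}$ and density of weighted simple closed curves, the same holds on all of $\mathcal{MF}$. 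The displayed ratio formula follows by dividing: $\mathrm{Ext}_{X_n}(\mu)^{1/2}/\mathrm{Ext}_{X_n}(\nu)^{1/2} = \mathcal{E}_{X_n}(\mu)/\mathcal{E}_{X_n}(\nu) \to \mathcal{E}_P(\mu)/\mathcal{E}_P(\nu)$ whenever $\mathcal{E}_P(\nu) \neq 0$, and this limit exists and is independent of any subsequence precisely because numerator and denominator are rescaled by the same factor. For (\ref{m2}), the map $P \mapsto [\alpha \mapsto \mathcal{E}_P(\alpha)]$ into $P(\mathbb{R}_+^{\mathcal{S}})$ is, by the previous discussion, nothing but the inclusion of the Gardiner--Masur boundary into $P(\mathbb{R}_+^{\mathcal{S}})$ coming from the closure $\overline{\phi(\mathcal{T}(S))}$; injectivity is then part of the statement that $\overline{\mathcal{T}(S)}^{GM} = \overline{\phi(\mathcal{T}(S))}$ sits inside $P(\mathbb{R}_+^{\mathcal{S}})$, i.e.\ distinct boundary points have distinct projective extremal-length spectra.

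I expect the main obstacle to be the equicontinuity step, i.e.\ proving that $\{\mathcal{E}_X(\cdot)\}_X$ is equicontinuous on compact subsets of $\mathcal{MF}$ uniformly in $X$, so that Arzel\`a--Ascoli applies and the limit function is genuinely continuous on $\mathcal{MF}$ (not merely on $\mathcal{S}$). This requires a uniform modulus of continuity for $\mu \mapsto \mathrm{Ext}_X(\mu)^{1/2}$ after dividing by $K_X^{1/2}$; the natural tool is the inequality $\bigl|\mathrm{Ext}_X(\mu)^{1/2} - \mathrm{Ext}_X(\nu)^{1/2}\bigr| \leq \sqrt{2\,i(\mu,\nu) + \cdots}$-type estimates of Gardiner--Masur, combined with the bound $\mathcal{E}_X(\mu) \le \mathrm{Ext}_{X_0}(\mu)^{1/2}$ to absorb the $K_X^{1/2}$ normalization. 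Once continuity on $\mathcal{MF}$ and the scalar-ambiguity are in hand, the non-negativity and homogeneity are formal, and (\ref{m2})--(\ref{m3}) are essentially a translation of the Gardiner--Masur construction into the normalized functions $\mathcal{E}$.
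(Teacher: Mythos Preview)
The paper does not give its own proof of this lemma; it is quoted from Miyachi \cite{Miyachi} and used as a black box throughout Section~\ref{sec:3}. Your outline is essentially the strategy Miyachi follows: the uniform bound $\mathcal{E}_X(\mu)\le \mathrm{Ext}_{X_0}(\mu)^{1/2}$, an Arzel\`a--Ascoli argument on compacta of $\mathcal{MF}$ to produce subsequential limits, and then the observation that the Gardiner--Masur embedding into $P(\mathbb{R}_+^{\mathcal{S}})$ forces any two such limits to be positive multiples of one another on $\mathcal{S}$, hence on all of $\mathcal{MF}$ by density and continuity. Your derivations of (\ref{m1}), (\ref{m2}) and the ratio formula in (\ref{m3}) from this are correct and routine.

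The only soft spot is precisely the one you flag. The inequality you propose, of the shape $\bigl|\mathrm{Ext}_X(\mu)^{1/2}-\mathrm{Ext}_X(\nu)^{1/2}\bigr|\le \sqrt{2\,i(\mu,\nu)+\cdots}$, is not a standard Gardiner--Masur estimate, and in that form it cannot work: two distinct measured foliations can satisfy $i(\mu,\nu)=0$ while having very different extremal lengths, so $i(\mu,\nu)$ alone does not control the difference. Miyachi's actual equicontinuity argument proceeds differently: he works on the sphere $\{\mathrm{Ext}_{X_0}(\cdot)=1\}\cong\mathcal{PMF}$ and passes through the Hubbard--Masur correspondence $\mu\leftrightarrow q_{\mu,X}$ between measured foliations and holomorphic quadratic differentials, using compactness of the bundle of unit-norm quadratic differentials over (pieces of) $\mathcal{T}(S)$ together with continuity of heights to obtain a modulus of continuity for $\mu\mapsto \mathcal{E}_X(\mu)$ that is uniform in $X$. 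So your plan is on the right track, but the specific tool you invoke for the crux step would need to be replaced by the quadratic-differential argument from \cite{Miyachi}.
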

Miyachi \cite{Miyachi} also proved that the projective class of the limit function $\mathcal{E}_P(\cdot)$ is independent of the choice of the base-point $X_0$.  In the following, if a  point $P$ is in $\mathcal{T}(S)$ , we will also define $\mathcal{E}_P(\cdot)$
as the function given in $(\ref{Equation:ext})$.

\bigskip
For any $P\in  \overline{\mathcal{T}(S)}^{GM}$, we define  $$\mathcal{Q}(P)=\sup_{\nu\in \mathcal{MF}}\frac{\mathcal{E}_P(\nu)}{\mathrm{Ext}_{X_0}(\nu)^{1/2}}$$ and
$$\mathcal{L}_P(\cdot): \mathcal{MF}\mapsto \mathbb{R}_+: \mu \to \frac{\mathcal{E}_P(\mu)}{\mathcal{Q}(P)}.$$

Recall that in $(\ref{m3})$ of Lemma \ref{Lemma:Miyachi}, the function $\mathcal{E}_P(\cdot)$ is defined up to multiplication by a postive constant.  Multiplying the function $\mathcal{E}_P(\cdot)$ by a positive constant
does not change the value of $\mathcal{L}_P(\cdot)$ and then $\mathcal{L}_P(\cdot)$ is well-defined (independent of
the choice of $\mathcal{E}_P(\cdot)$).
We may consider $\mathcal{L}_P(\mu)$ as a function of the product space $ \overline{\mathcal{T}(S)}^{GM}\times \mathcal{MF}$.
Note that  Walsh \cite{Walsh} used the geodesic currents theory of  Bonahon \cite{Bonahon} to define an analogous function on Thurston's compactification, given by
$$ (P, \mu) \to \frac{I_P(\mu)}{\sup_{\nu\in \mathcal{MF}}\frac{I_P(\nu)}{\ell_{X_0}(\nu)}},$$
where $I_P(\nu)=\ell_P(\nu)$ if $P\in \mathcal{T}(S)$ and $I_P(\nu)=i(P, \nu)$ if $P\in \mathcal{PMF}=\partial \overline{\mathcal{T}(S)}^{Th}$.

The importance of $\mathcal{L}_P(\mu)$ is indicated by the following theorem.

\begin{theorem}\label{Lem:converge}
A sequence $(P_n)$ in $ \overline{\mathcal{T}(S)}^{GM}$ converges to a point $P\in  \overline{\mathcal{T}(S)}^{GM}$
if and only if $\mathcal{L}_{P_n}$ converges to $\mathcal{L}_{P}$ uniformly on compact set of $\mathcal{MF}$.
\end{theorem}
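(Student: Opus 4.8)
The plan is to exhibit $P\mapsto\mathcal{L}_P$ as a topological embedding of $\overline{\mathcal{T}(S)}^{GM}$ into $C(\mathcal{MF})$ equipped with the topology of uniform convergence on compact sets; the two directions of the stated equivalence are then nothing but the continuity of this map and the continuity of its inverse on the image. I would first collect a few elementary facts about the normalization. Because the ratio $\mathcal{E}_P(\nu)/\mathrm{Ext}_{X_0}(\nu)^{1/2}$ is homogeneous of degree $0$ in $\nu$ (by $(\ref{m1})$ of Lemma~\ref{Lemma:Miyachi}), the supremum defining $\mathcal{Q}(P)$ is actually a maximum over the compact cross-section $\mathcal{PMF}=\{\mu\in\mathcal{MF}:\mathrm{Ext}_{X_0}(\mu)=1\}$; hence $0<\mathcal{Q}(P)<\infty$, the function $\mathcal{L}_P$ satisfies $\mathcal{L}_P\le\mathrm{Ext}_{X_0}(\cdot)^{1/2}$ on all of $\mathcal{MF}$ with $\max_{\mathcal{PMF}}\mathcal{L}_P=1$, and $\mathcal{L}_P|_{\mathcal{S}}$ is a nonzero element of $\mathbb{R}_+^{\mathcal{S}}$ by the injectivity statement $(\ref{m2})$. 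Furthermore Kerckhoff's distance formula gives $\mathcal{Q}(P)=1$ and hence $\mathcal{L}_P=\mathcal{E}_P$ for every $P\in\mathcal{T}(S)$, and a short argument using $(\ref{m3})$ together with the continuity of $\pi$ on $\mathbb{R}_+^{\mathcal{S}}\setminus\{0\}$ shows that $P\mapsto\pi(\mathcal{L}_P|_{\mathcal{S}})$ is the identity map of $\overline{\mathcal{T}(S)}^{GM}$, viewed inside $P(\mathbb{R}_+^{\mathcal{S}})$.

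The key step is a normalized refinement of $(\ref{m3})$ of Lemma~\ref{Lemma:Miyachi}: \emph{if a sequence $(X_n)$ in $\mathcal{T}(S)$ converges to $P$ in $\overline{\mathcal{T}(S)}^{GM}$, then $\mathcal{L}_{X_n}=\mathcal{E}_{X_n}\to\mathcal{L}_P$ uniformly on every compact subset of $\mathcal{MF}$} --- the entire sequence, not just a subsequence. To prove it, take any subsequence; $(\ref{m3})$ produces a further subsequence along which $\mathcal{E}_{X_{n_j}}\to c\,\mathcal{E}_P$ uniformly on compacts for some $c>0$. Restricting this uniform convergence to the compact set $\mathcal{PMF}$ and using $\max_{\mathcal{PMF}}\mathcal{E}_{X_{n_j}}=\mathcal{Q}(X_{n_j})=1$ forces $c\cdot\mathcal{Q}(P)=1$, i.e.\ $c=1/\mathcal{Q}(P)$; thus the subsequential limit is $\mathcal{L}_P$ regardless of the subsequence chosen, which means the full sequence converges to $\mathcal{L}_P$.

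For the forward implication, let $P_n\to P$ in $\overline{\mathcal{T}(S)}^{GM}$, which is compact and metrizable; fix a metric $\rho$ on it and an exhaustion $K_1\subset K_2\subset\cdots$ of $\mathcal{MF}$ by compact sets. Since $\mathcal{T}(S)$ is dense, applying the key step to a sequence in $\mathcal{T}(S)$ approaching $P_n$ lets me choose $X_n\in\mathcal{T}(S)$ with $\rho(X_n,P_n)<1/n$ and $\sup_{K_n}|\mathcal{L}_{X_n}-\mathcal{L}_{P_n}|<1/n$. Then $X_n\to P$, so the key step gives $\mathcal{L}_{X_n}\to\mathcal{L}_P$ uniformly on each $K_m$, and the triangle inequality on $K_m$ yields $\mathcal{L}_{P_n}\to\mathcal{L}_P$ uniformly on $K_m$ for every $m$, hence on every compact subset of $\mathcal{MF}$. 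For the converse, if $\mathcal{L}_{P_n}\to\mathcal{L}_P$ uniformly on compacts then in particular $\mathcal{L}_{P_n}(\alpha)\to\mathcal{L}_P(\alpha)$ for each $\alpha\in\mathcal{S}$, so $\mathcal{L}_{P_n}|_{\mathcal{S}}\to\mathcal{L}_P|_{\mathcal{S}}$ in the product topology of $\mathbb{R}_+^{\mathcal{S}}$; since $\mathcal{L}_P|_{\mathcal{S}}\neq 0$ and $\pi$ is continuous off the origin, $P_n=\pi(\mathcal{L}_{P_n}|_{\mathcal{S}})\to\pi(\mathcal{L}_P|_{\mathcal{S}})=P$ in $\overline{\mathcal{T}(S)}^{GM}$.

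The part I expect to be the real obstacle is the key step. $(\ref{m3})$ of Lemma~\ref{Lemma:Miyachi} only gives subsequential convergence, and only up to an undetermined positive scalar, so the crux is to pin that scalar down; this is precisely what the normalization built into $\mathcal{L}_P$ (that it attains the value $1$ at its maximum over $\mathcal{PMF}$) accomplishes, and it is the reason one must work with the canonical representative $\mathcal{L}_P$ rather than with $\mathcal{E}_P$ directly. The remaining, more routine difficulty is the passage from sequences lying in $\mathcal{T}(S)$ to arbitrary sequences in the compactification, which is handled by the density-plus-diagonal argument of the third paragraph.
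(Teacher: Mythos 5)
Your proposal is correct, and its core is the same as the paper's: in both arguments the undetermined scalar in (iii) of Lemma \ref{Lemma:Miyachi} is eliminated by the normalization $\mathcal{Q}$ (you use $\mathcal{Q}(X)=1$ for $X\in\mathcal{T}(S)$, restrict the uniform convergence to the compact cross-section $\mathcal{PMF}$ and get $c=1/\mathcal{Q}(P)$; the paper lets $\mathcal{Q}(P_{n_j})\to t_0\mathcal{Q}(P)$ and cancels $t_0$ --- the same computation), and then the general case is reduced to sequences inside $\mathcal{T}(S)$ by density. Where you genuinely differ is in the execution of the two remaining steps, and in both places your route buys something. For the passage from interior sequences to arbitrary sequences, the paper takes unspecified approximating sequences $P_{n,k}\to P_n$, sets $P_n'=P_{n,n}$, and argues by contradiction, in the course of which step $(4.3)$ asserts $|\mathcal{L}_{P_n}(\mu_0)-\mathcal{L}_P(\mu_0)|<\delta_0/3$ ``by definition of $P_n$'', which is essentially the pointwise convergence being proved; your quantitative choice $\rho(X_n,P_n)<1/n$ together with $\sup_{K_n}|\mathcal{L}_{X_n}-\mathcal{L}_{P_n}|<1/n$, followed by the triangle inequality on each $K_m$, is the careful way to run this diagonal argument and avoids that circularity. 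For the converse, the paper extracts a convergent subsequence and invokes the injectivity in (ii) of Lemma \ref{Lemma:Miyachi}, while you go directly through the identification $P=\pi(\mathcal{L}_P|_{\mathcal{S}})$ and the continuity of $\pi$ away from $0$; this avoids any compactness extraction, at the cost of using that identification for boundary points, which is the natural reading of (ii) and is also what the paper uses when it concludes that $Y$ and $P$ ``represent the same point''. Two small debts remain in your write-up: the metric $\rho$ on $\overline{\mathcal{T}(S)}^{GM}$ should be justified (metrizability follows, for instance, from Miyachi's embedding $\Phi$ and the distance recalled at the end of Section \ref{main}; note the paper itself tacitly uses sequential compactness in both directions, so you are not assuming more than it does), and when the limit point of a sequence in $\mathcal{T}(S)$ lies in $\mathcal{T}(S)$ rather than on the boundary, your key step should appeal to the continuity of $X\mapsto\mathrm{Ext}_X(\cdot)$ and of $d_T$ (the paper's first case) rather than to (iii) of Lemma \ref{Lemma:Miyachi}, if that item is read as a statement about boundary points only.
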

\begin{proof}
($\Rightarrow$) First assume that $ P\in \mathcal{T}(S)$ and $P_n\to P$. By definition,
$$\mathcal{Q}(P)=\sup_{\nu\in \mathcal{MF}}\frac{\mathrm{Ext}_P^{1/2}(\nu)}{K_P^{1/2}\mathrm{Ext}_{X_0}(\nu)^{1/2}}=1.$$
As a result,

\begin{equation}\label{Equation:pp}
\mathcal{L}_P(\mu)=\frac{\mathrm{Ext}_P^{1/2}(\mu)} {K_P^{1/2}} .
\end{equation}
As $n$ is sufficiently large, $P_n\in \mathcal{T}(S)$.
The convergence of  $\mathcal{L}_{P_n}$  to $\mathcal{L}_{P}$ follows directly from
the continuity of the extremal length function.

Now we assume that $P_n\in \mathcal{T}(S), n=1,2, \cdots,  P\in  \partial\overline{\mathcal{T}(S)}^{GM}$ and $P_n$
converges in the Gardiner-Masur compactification to $P$. Let $(P_{n_j})$ be any subsequence of $(P_n)$ such that
for some $t_0>0$, $\mathcal{E}_{P_{n_j}}(\cdot)$ converges to $t_0\mathcal{E}_P(\cdot)$ uniformly on  compact subsets of
$\mathcal{MF}$.

Since
$$\mathcal{Q}(P_n)=\sup_{\nu\in \mathcal{MF}}\frac{\mathcal{E}_{P_n}(\nu)}{\mathrm{Ext}_{X_0}(\nu)^{1/2}}=\sup_{\nu\in \mathcal{PMF}}\frac{\mathcal{E}_{P_n}(\nu)}{\mathrm{Ext}_{X_0}(\nu)^{1/2}},$$

 $\mathcal{Q}(P_{n_j})$ converges to $t_0\mathcal{Q}(P)$. Therefore,
$$\mathcal{L}_{P_{n_j}}(\mu)
=\frac{\mathcal{E}_{P_{n_j}}(\mu)}{\mathcal{Q}(P_{n_j})}$$
converges to $\frac{t_0\mathcal{E}_{P}(\mu)}{t_0\mathcal{Q}(P)}=\mathcal{L}_P(\mu)$ uniformly on compact set of $\mathcal{MF}$. Since the limit is independent of the choice of the subsequence $(P_{n_j})$,
 $\mathcal{L}_{P_{n}}$ converges to $\mathcal{L}_P$ uniformly on compact sets of $\mathcal{MF}$.

For the general case, assume that $P_n  \in \overline{\mathcal{T}(S)}^{GM}$ and $P_n$
converges in the Gardiner-Masur compactification to $P\in  \partial\overline{\mathcal{T}(S)}^{GM}$.
It suffices to show that for any fixed compact set $K$ of $\mathcal{MF}$ and for any $\epsilon>0$ , there exists an $N$, such that for any $n>N$,
$|\mathcal{L}_{P_{n}}(\cdot)-\mathcal{L}_{P}(\cdot)|<\epsilon$ uniformly on $K$.

By the above argument, for each $P_n$, there exists a sequence $(P_{n,k})_{k=1, \cdots, \infty}$ in $\mathcal{T}(S)$ such that $\lim_{k\to\infty} P_{n,k}= P_n$ and $\mathcal{L}_{P_{n,k}}(\cdot)$ converges to $\mathcal{L}_{P_{n}}(\cdot)$ uniformly on compact sets of $\mathcal{MF}$.

Set $P_n'=P_{n,n}$. We claim that $P_n'$ converges to $P$. Otherwise, suppose that there is a subsequence of $P_n'$, still denoted by $P_n'$, which converges to some limit $Q\neq P$.
We have

\begin{enumerate}[(4.1)]
\item $\mathcal{L}_P  \neq \mathcal{L}_Q$ and then there exists a constant $\delta_0$ and a measured foliation $\mu_0$ such that $|\mathcal{L}_P(\mu_0) - \mathcal{L}_Q(\mu_0)| > \delta_0$.
\item $\mathcal{L}_{P_{n}'}(\cdot)$ converges to $\mathcal{L}_Q(\cdot)$ uniformly on compact sets of $\mathcal{MF}$. In particular,
$$|\mathcal{L}_{P_{n}'}(\mu_0) - \mathcal{L}_Q(\mu_0)|< \delta_0/3$$
for $n$ sufficiently large.
\item By definition of $P_n$ and $P_n'$,
$$|\mathcal{L}_{P_{n}}(\mu_0) - \mathcal{L}_{P}(\mu_0)|< \delta_0/3,$$
$$|\mathcal{L}_{P_{n}'}(\mu_0) - \mathcal{L}_{P_{n}}(\mu_0)|< \delta_0/3$$
for $n$ sufficiently large.

\end{enumerate}
From $(4.2), (4.3)$ and the triangle inequality,
$$|\mathcal{L}_P(\mu_0) - \mathcal{L}_Q(\mu_0)| < \delta_0,$$
which contradicts with $(4.1)$.

As a result, $P_n'$ converges to $P$. There is a sufficiently large $N$ such that
for any $n>N$, we have
$$|\mathcal{L}_{P_{n}}(\cdot)-\mathcal{L}_{P_{n}'}(\cdot)|<\frac{\epsilon}{2},  \  |\mathcal{L}_{P_{n}'}(\cdot)-\mathcal{L}_{P}(\cdot)|<\frac{\epsilon}{2}$$
uniformly on $K$.
It follows that
$|\mathcal{L}_{P_{n}}(\cdot)-\mathcal{L}_{P}(\cdot)|<\epsilon$ uniformly on  $K$.

\bigskip
($\Leftarrow$) For $P_n\in \overline{\mathcal{T}(S)}^{GM} (n=1,2, \cdots)$ and $  P$ in
$ \partial\overline{\mathcal{T}(S)}^{GM}$, if $\mathcal{L}_{P_n}$ converges to $\mathcal{L}_{P}$ uniformly on compact set of $\mathcal{MF}$, we want to show that $P_n$ converges to $P$. Let $(Y_n)$ be a subsequence of $(P_n)$
converging in $\overline{\mathcal{T}(S)}^{GM}$ to a point $Y$. From the above discussion, we have
that $\mathcal{L}_{Y_n}$ converges to $\mathcal{L}_Y$ uniformly on any compact set of $\mathcal{MF}$.
Combining this with our assumption that $\mathcal{L}_{P_n}$ converges to $\mathcal{L}_{P}$, we have $\mathcal{L}_Y=\mathcal{L}_P$; that is, for any $\mu\in \mathcal{MF}$,
 $$\frac{\mathcal{E}_Y(\mu)}{\mathcal{Q}(Y)}= \frac{\mathcal{E}_P(\mu)}{\mathcal{Q}(P)}, $$
 or equivalently,
 $$\mathcal{E}_Y(\mu)=\frac{\sup_{\nu\in \mathcal{MF}}\frac{\mathcal{E}_P(\nu)}{\mathrm{Ext}_{X_0}(\nu)^{1/2}}}
 {\sup_{\nu\in \mathcal{MF}}\frac{\mathcal{E}_Y(\nu)}{\mathrm{Ext}_{X_0}(\nu)^{1/2}}} \mathcal{E}_P(\mu).$$
Therefore, $\mathcal{E}_Y$ equals to $\mathcal{E}_P$ up to a positive constant. By $(\mathrm{ii})$ of Lemma \ref{Lemma:Miyachi}, they represent the same point in
$\overline{\mathcal{T}(S)}^{GM}$.
As we have shown that any convergent subsequence of $(P_n)$ converges to $P$, it follows that $P_n$ converges to $P$.
\end{proof}

\begin{lemma}\label{Lem:limit}
Let $(X_n)$ be a sequence of points in $\mathcal{T}(S)$ that converges to a point $P$ in the Gardiner-Masur boundary.
Let $Y$ be a point in $\mathcal{T}(S)$. Let $(\mu_n)$ be a sequence in $\mathcal{PMF}$ such that
$$d_T(X_n,Y)=\frac{1}{2}\log \frac{\mathrm{Ext}_Y(\mu_n)}{\mathrm{Ext}_{X_n}(\mu_n)}.$$ Then any limit point
$\mu_\infty\in \mathcal{PMF}$ of a convergent subsequence of the sequence $(\mu_n)$ satisfies $\mathcal{E}_P(\mu_\infty)=0$.
\end{lemma}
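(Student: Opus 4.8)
The plan is to exploit the interplay between two facts: the defining property of $\mu_n$ forces the normalized extremal length $\mathcal{E}_{X_n}(\mu_n)$ to decay to $0$, while along a suitable subsequence $\mathcal{E}_{X_n}$ converges, up to a positive scalar, to $\mathcal{E}_P$ uniformly on the compact set $\mathcal{PMF}$, forcing $\mathcal{E}_{X_n}(\mu_n)\to t_0\mathcal{E}_P(\mu_\infty)$. Comparing the two limits gives $\mathcal{E}_P(\mu_\infty)=0$.

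First I would reduce to a convenient subsequence. Suppose $\mu_{n_k}\to\mu_\infty$ in $\mathcal{PMF}$. The subsequence $(X_{n_k})$ still converges to $P$ in the Gardiner-Masur compactification, so by part $(\mathrm{iii})$ of Lemma \ref{Lemma:Miyachi} we may pass to a further subsequence, still written $(X_n)$ with the matching $(\mu_n)$, along which $\mathcal{E}_{X_n}(\cdot)$ converges to $t_0\,\mathcal{E}_P(\cdot)$ for some $t_0>0$, uniformly on compact subsets of $\mathcal{MF}$. Recall that $\mathcal{PMF}$ is identified with the compact cross-section $\{\mu\in\mathcal{MF}\mid \mathrm{Ext}_{X_0}(\mu)=1\}$; since the $\mu_n$ and $\mu_\infty$ lie in this fixed compact subset of $\mathcal{MF}$ and $\mu_n\to\mu_\infty$, uniform convergence on it together with the continuity of $\mathcal{E}_P$ yields
$$\mathcal{E}_{X_n}(\mu_n)\longrightarrow t_0\,\mathcal{E}_P(\mu_\infty).$$

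Next I would carry out the estimate showing the left-hand side tends to $0$. Using $d_T(X_0,X_n)=\tfrac12\log K_{X_n}$, so that $K_{X_n}^{1/2}=e^{d_T(X_0,X_n)}$, and the hypothesis $d_T(X_n,Y)=\tfrac12\log\frac{\mathrm{Ext}_Y(\mu_n)}{\mathrm{Ext}_{X_n}(\mu_n)}$, so that $\mathrm{Ext}_{X_n}(\mu_n)^{1/2}=\mathrm{Ext}_Y(\mu_n)^{1/2}e^{-d_T(X_n,Y)}$, one obtains
$$\mathcal{E}_{X_n}(\mu_n)=\frac{\mathrm{Ext}_{X_n}(\mu_n)^{1/2}}{K_{X_n}^{1/2}}=\mathrm{Ext}_Y(\mu_n)^{1/2}\,e^{-d_T(X_n,Y)-d_T(X_0,X_n)}.$$
Here $\mathrm{Ext}_Y(\mu_n)^{1/2}$ is bounded, because $Y$ is fixed, $\mathrm{Ext}_Y$ is continuous, and the $\mu_n$ lie in the compact set $\mathcal{PMF}$; moreover $d_T(X_n,Y)\ge 0$ and $d_T(X_0,X_n)\to\infty$, the latter since $X_n$ converges to a point of the Gardiner-Masur boundary and therefore eventually leaves every compact subset of $\mathcal{T}(S)$. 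Hence the exponential factor tends to $0$, so $\mathcal{E}_{X_n}(\mu_n)\to 0$. Combining with the previous paragraph, $t_0\,\mathcal{E}_P(\mu_\infty)=0$, and since $t_0>0$ we conclude $\mathcal{E}_P(\mu_\infty)=0$.

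I do not expect a serious obstacle here: the only points needing care are that the $\mu_n$ stay in a fixed compact subset of $\mathcal{MF}$ (so that the uniform convergence of $\mathcal{E}_{X_n}$ on compacta can actually be evaluated along $\mu_n$ and passed to the limit), and the fact that $d_T(X_0,X_n)\to\infty$, which is where convergence of $X_n$ to a boundary point is genuinely used. If one wished to avoid introducing the scaling constant $t_0$, one could instead argue by contradiction using Theorem \ref{Lem:converge}: were $\mathcal{E}_P(\mu_\infty)>0$, then $\mathcal{L}_P(\mu_\infty)>0$ and $\mathcal{L}_{X_n}(\mu_n)=\mathcal{E}_{X_n}(\mu_n)\to\mathcal{L}_P(\mu_\infty)>0$, contradicting the estimate above; but the subsequence argument is the most direct route.
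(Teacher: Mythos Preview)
Your proposal is correct and follows essentially the same route as the paper: pass to a subsequence along which $\mathcal{E}_{X_n}\to t_0\,\mathcal{E}_P$ uniformly on compacta, and combine this with the identity $\mathcal{E}_{X_n}(\mu_n)=\mathrm{Ext}_Y(\mu_n)^{1/2}e^{-d_T(X_n,Y)-d_T(X_0,X_n)}$. The only cosmetic difference is that the paper works with the reciprocal quantity $\mathrm{Ext}_Y(\mu_n)^{1/2}/\mathcal{E}_{X_n}(\mu_n)=e^{d_T(X_n,Y)+d_T(X_0,X_n)}\to\infty$ and phrases the conclusion as a contradiction, whereas you compute $\mathcal{E}_{X_n}(\mu_n)\to 0$ directly; the underlying computation and idea are identical.
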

\begin{proof}
 Since $(X_n)$ converges to $P$,  by $(\mathrm{iii})$ of Lemma \ref{Lemma:Miyachi}, there exists a subsequence, still denoted by $(X_{n})$, such that
$\mathcal{E}_{X_{n}}(\cdot)$ converges to  $t_0\mathcal{E}_P(\cdot)$ (for some constant $t_0>0$) uniformly on compact subsets of
$\mathcal{MF}$.

For any limit point $\mu_\infty\in \mathcal{PMF}$ of a convergent subsequence of the sequence $(\mu_n)$, if  $\mathrm{Ext}_P(\mu_\infty)\ne 0$,  then
the function $\mathrm{Ext}_Y(\mu_n)^{1/2}/\mathcal{E}_{X_n}(\mu_n)$ converges to
$\mathrm{Ext}_Y(\mu_\infty)^{1/2}/t_0\mathcal{E}_{P}(\mu_\infty)$.
On the other hand,
\begin{eqnarray*}
\frac{\mathrm{Ext}_Y(\mu_n)^{1/2}}{\mathcal{E}_{X_n}(\mu_n)}&=&\frac{\mathrm{Ext}_Y(\mu_n)^{1/2}}{\mathrm{Ext}_{X_n}(\mu_n)^{1/2}}K_{X_n}^{1/2} \\
&=&e^{{d_T(X_n,Y)}+d_T(X_n,X_0)}
\end{eqnarray*}
which tends to $\infty$ as $X_n$ tends to the boundary.

As a result,
$\mathrm{Ext}_Y(\mu_\infty)^{1/2}/t_0\mathcal{E}_{P}(\mu_\infty)=\infty$ and then $\mathcal{E}_{P}(\mu_\infty)=0$.
\end{proof}

\begin{lemma}[Minsky \cite{Min}]\label{equ:Minsky}
For any $\mu,\nu\in \mathcal{MF}(S)$ and any $X\in \mathcal{T}(S)$, we have
$$i(\mu,\nu)\leq \mathrm{Ext}_{X}(\mu)^{1/2}\mathrm{Ext}_{X}(\nu)^{1/2}.$$
\end{lemma}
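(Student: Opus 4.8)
The plan is to establish the inequality first in the special case where $\mu$ is a simple closed curve and $\nu$ is an arbitrary measured foliation, by comparing everything to the flat metric of one cleverly chosen holomorphic quadratic differential, and then to obtain the general statement by homogeneity, density of weighted simple closed curves, and continuity. We may of course assume $\mu,\nu\neq 0$, the remaining cases being trivial.

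First I would invoke the Hubbard--Masur theorem to produce a holomorphic quadratic differential $q$ on $X$ whose vertical measured foliation $\mathcal{F}_v(q)$ is $\nu$. The key normalization is that $\|q\|:=\int_X|q(z)|\,|dz|^2$ equals $\mathrm{Ext}_X(\nu)$. This is checked directly when $\nu$ is a weighted simple closed curve --- then $q$ is Jenkins--Strebel with a single cylinder, $\|q\|$ is its circumference times its height, and this coincides with the extremal length by the modulus characterization of extremal length used in Section~2 --- and it follows in general by continuity of $\mathrm{Ext}_X(\cdot)$ on $\mathcal{MF}$. (If one prefers to avoid Hubbard--Masur, one can work with the Jenkins--Strebel differential only, proving the inequality when $\nu$ is a weighted simple closed curve and adding one more density step at the end.)

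With $q$ in hand, let $\alpha\in\mathcal{S}$ and let $\alpha^{*}$ be a $q$-geodesic representative of $\alpha$, realizing the minimal length $L$ in the homotopy class of $\alpha$ for the flat metric $|q|^{1/2}|dz|$. On one side, the transverse measure $|dx|$ of $\mathcal{F}_v(q)$ (in natural coordinates $z=x+iy$ for $q$) is dominated pointwise by $|q|^{1/2}|dz|$, so
\[
i(\alpha,\nu)=i\bigl(\alpha,\mathcal{F}_v(q)\bigr)=\inf_{\alpha'\sim\alpha}\int_{\alpha'}|dx|\ \leq\ \int_{\alpha^{*}}|q|^{1/2}|dz|\ =\ L.
\]
On the other side, using the flat metric $\sigma=|q|^{1/2}|dz|$ as a test metric in the definition of extremal length, one has $A(\sigma)=\|q\|$ and $L_{\sigma}(\alpha)=L$, hence
\[
\mathrm{Ext}_X(\alpha)\ \geq\ \frac{L^{2}}{\|q\|}\ =\ \frac{L^{2}}{\mathrm{Ext}_X(\nu)}.
\]
Combining the two displays yields $i(\alpha,\nu)^{2}\leq L^{2}\leq \mathrm{Ext}_X(\alpha)\,\mathrm{Ext}_X(\nu)$, which is the asserted inequality for $\mu=\alpha\in\mathcal{S}$.

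Finally I would pass to general $\mu$. Both sides are homogeneous of degree one in the first variable, so the inequality holds for every weighted simple closed curve $\mu=t\alpha$; since such foliations are dense in $\mathcal{MF}$ (Thurston) and both $\mu\mapsto i(\mu,\nu)$ and $\mu\mapsto\mathrm{Ext}_X(\mu)^{1/2}$ are continuous on $\mathcal{MF}$ (the latter by Kerckhoff's continuous extension recalled in Section~2), the inequality extends to all $\mu\in\mathcal{MF}$, and $\nu$ was arbitrary. The only point demanding real care is the input from the flat geometry of $q$: namely the normalization $\|q\|=\mathrm{Ext}_X(\mathcal{F}_v(q))$ and the identification of $i(\alpha,\mathcal{F}_v(q))$ with the infimum of the horizontal variation $\int_{\alpha'}|dx|$. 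Once those structural facts are granted, the proof is just the two-line estimate above, and everything else is quoted from the Preliminaries.
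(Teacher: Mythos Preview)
Your argument is correct, and the underlying mechanism---use the flat metric of a well-chosen quadratic differential both as a test metric for extremal length and as an upper bound for the transverse measure---is exactly what the paper does. The differences are organizational rather than substantive. The paper first reduces \emph{both} variables to simple closed curves $\alpha,\beta\in\mathcal{S}$ by density and continuity, and then takes $q$ to be the one-cylinder Jenkins--Strebel differential of height~$1$ for $\alpha$ (the first variable); the identity $\mathrm{Ext}_X(\alpha)=\|q\|$ is then immediate from the modulus of a cylinder, and the estimate $L_\rho(\beta)\geq i(\alpha,\beta)$ is read off from crossings of the cylinder. By contrast you keep $\nu$ general, invoke Hubbard--Masur to realize it as the vertical foliation of $q$, and use the deeper normalization $\|q\|=\mathrm{Ext}_X(\nu)$. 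Your parenthetical alternative (Jenkins--Strebel for $\nu$, plus one more density step) is precisely the paper's route, just with the roles of the two variables swapped. The paper's version is a bit more elementary in that it avoids Hubbard--Masur entirely; yours is a bit cleaner in that only one density argument is needed. Either way the two-line estimate is the same.
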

\begin{proof}
We sketch a simple proof. Note that the intersection number $i(\cdot,\cdot)$ is continuous on $\mathcal{MF}(S)\times \mathcal{MF}(S) $. By the density of simple close curves in $\mathcal{MF}(S)$, it suffices to prove the lemma for
any $\alpha, \beta\in \mathcal{S}$. Let $q$ be the one-cylinder Strebel differential of height $1$ on $X$ determined
by  $\alpha$.
By a theorem of Jenkins-Strebel \cite{Strebel}, the extremal length $\mathrm{Ext}_{X}(\alpha)$ is realized by:
$$\mathrm{Ext}_{X}(\alpha)=\frac{L^2_\rho(\alpha)}{A(\rho)},$$
where $\rho=|q|^{1/2}|dz|$.

The complement of vertical critical leaves is a cylinder foliated by circles isotopic to $\alpha$.
Note that the circumference and height of the cylinder are equal to $L_\rho(\alpha)$
and $1$ respectively. As a result, $L_\rho(\alpha)=A(\rho)$ and
$\mathrm{Ext}_{X}(\alpha)={L_\rho(\gamma_1)}=A(\rho)$.

Since the length of $\beta$ measured by
the metric $\rho$ is larger than $i(\alpha,\beta)$, we have a lower bound of $\mathrm{Ext}_{X}(\beta)$:
$$\mathrm{Ext}_{X}(\beta)\geq \frac{i(\alpha,\beta)^2}{A(\rho)}= \frac{i(\alpha,\beta)^2}{\mathrm{Ext}_{X}(\alpha)}.$$
As a result, $\mathrm{Ext}_{X}(\alpha)^{1/2}\mathrm{Ext}_{X}(\beta)^{1/2}\geq i(\alpha,\beta)$.
\end{proof}

\begin{lemma}\label{Lem:intersection}Let $(X_n)$ be a sequence of points in $\mathcal{T}(S)$ that converges to a point $P$ in the Gardiner-Masur boundary.
Let $Y$ be a point in $\mathcal{T}(S)$. Let $(\mu_n)$ be a sequence in $\mathcal{PMF}$ such that
$$d_T(X_n,Y)=\frac{1}{2}\log \frac{\mathrm{Ext}_Y(\mu_n)}{\mathrm{Ext}_{X_n}(\mu_n)}.$$
For any $\nu \in \mathcal{MF}$, if $\mathcal{E}_P(\nu)=0$,
then any limit point
$\mu_\infty\in \mathcal{PMF}$ of a convergent subsequence of the sequence $(\mu_n)$ satisfies  $i(\nu,\mu_\infty)=0$.
\end{lemma}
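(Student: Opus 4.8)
The plan is to deduce the conclusion from Minsky's inequality (Lemma \ref{equ:Minsky}), the equation defining $\mu_n$, and the convergence $\mathcal{E}_{X_n}\to t_0\mathcal{E}_P$ supplied by part $(\mathrm{iii})$ of Lemma \ref{Lemma:Miyachi}. Fix $\nu\in\mathcal{MF}$ with $\mathcal{E}_P(\nu)=0$, and let $(\mu_{n_k})$ be a subsequence of $(\mu_n)$ converging in $\mathcal{PMF}$ to $\mu_\infty$. Since $(X_{n_k})$ still converges to $P$ in $\overline{\mathcal{T}(S)}^{GM}$, I would first pass to a further subsequence, still written $(n_k)$, so that $\mathcal{E}_{X_{n_k}}(\cdot)\to t_0\mathcal{E}_P(\cdot)$ uniformly on compact subsets of $\mathcal{MF}$ for some $t_0>0$; in particular $\mathcal{E}_{X_{n_k}}(\nu)\to t_0\mathcal{E}_P(\nu)=0$.

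Next I would apply Lemma \ref{equ:Minsky} on $X_{n_k}$ to the pair $(\nu,\mu_{n_k})$ and divide by $\mathrm{Ext}_Y(\mu_{n_k})^{1/2}$. Using that $\mu_{n_k}$ realizes $d_T(X_{n_k},Y)$, so that $\mathrm{Ext}_{X_{n_k}}(\mu_{n_k})^{1/2}/\mathrm{Ext}_Y(\mu_{n_k})^{1/2}=e^{-d_T(X_{n_k},Y)}$, this yields
$$\frac{i(\nu,\mu_{n_k})}{\mathrm{Ext}_Y(\mu_{n_k})^{1/2}}\ \leq\ \mathrm{Ext}_{X_{n_k}}(\nu)^{1/2}\,e^{-d_T(X_{n_k},Y)}.$$
Then I would rewrite the right-hand side using $\mathrm{Ext}_{X_{n_k}}(\nu)^{1/2}=K_{X_{n_k}}^{1/2}\,\mathcal{E}_{X_{n_k}}(\nu)=e^{d_T(X_0,X_{n_k})}\mathcal{E}_{X_{n_k}}(\nu)$ together with the triangle inequality $d_T(X_0,X_{n_k})-d_T(X_{n_k},Y)\le d_T(X_0,Y)$, obtaining
$$\frac{i(\nu,\mu_{n_k})}{\mathrm{Ext}_Y(\mu_{n_k})^{1/2}}\ \leq\ e^{d_T(X_0,Y)}\,\mathcal{E}_{X_{n_k}}(\nu)\ \longrightarrow\ 0.$$

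To finish, I would observe that $\mu_\infty\in\mathcal{PMF}$ is a nonzero class, hence $\mathrm{Ext}_Y(\mu_\infty)>0$, so by continuity of extremal length $\mathrm{Ext}_Y(\mu_{n_k})^{1/2}\to\mathrm{Ext}_Y(\mu_\infty)^{1/2}>0$; therefore $i(\nu,\mu_{n_k})\to 0$, and by continuity of the intersection number on $\mathcal{MF}\times\mathcal{MF}$ this gives $i(\nu,\mu_\infty)=0$. The step I expect to need the most care is the estimate of $\mathrm{Ext}_{X_{n_k}}(\nu)^{1/2}\,e^{-d_T(X_{n_k},Y)}$: the factor $\mathrm{Ext}_{X_{n_k}}(\nu)^{1/2}$ diverges (it grows like $K_{X_{n_k}}^{1/2}=e^{d_T(X_0,X_{n_k})}$), and the point is that this divergence is exactly absorbed by $e^{-d_T(X_{n_k},Y)}$ up to the bounded multiplicative error $e^{d_T(X_0,Y)}$, leaving only the vanishing factor $\mathcal{E}_{X_{n_k}}(\nu)$. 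Beyond this bookkeeping and the repeated passage to subsequences, there is no essential obstacle; note in particular that Lemma \ref{Lem:limit} is not needed here, only the hypothesis $\mathcal{E}_P(\nu)=0$.
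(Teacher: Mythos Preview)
Your proof is correct and follows essentially the same route as the paper: apply Minsky's inequality on $X_{n_k}$, use that $\mu_{n_k}$ realizes $d_T(X_{n_k},Y)$ to replace $\mathrm{Ext}_{X_{n_k}}(\mu_{n_k})^{1/2}$ by $e^{-d_T(X_{n_k},Y)}\mathrm{Ext}_Y(\mu_{n_k})^{1/2}$, rewrite $\mathrm{Ext}_{X_{n_k}}(\nu)^{1/2}=e^{d_T(X_0,X_{n_k})}\mathcal{E}_{X_{n_k}}(\nu)$, and control the exponential via the triangle inequality. The only cosmetic difference is that the paper bounds $\mathrm{Ext}_Y(\mu_{n_k})^{1/2}$ by the constant $C=\sup_{\mu\in\mathcal{PMF}}\mathrm{Ext}_Y(\mu)^{1/2}$ (obtaining the slightly stronger inequality $i(\nu,\mu_\infty)\le Ct_0 e^{d_T(X_0,Y)}\mathcal{E}_P(\nu)$ valid for all $\nu$), whereas you divide through and use $\mathrm{Ext}_Y(\mu_{n_k})^{1/2}\to\mathrm{Ext}_Y(\mu_\infty)^{1/2}>0$; both are fine.
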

\begin{proof}
Consider any subsequence of $(X_n)$, still denoted by $(X_n)$, such that $\mathcal{E}_{X_{n}}(\cdot)$ converges to  $t_0\mathcal{E}_P(\cdot)$
(for some constant $t_0>0$) uniformly on any compact subset of
$\mathcal{MF}$.

By  Lemma \ref{equ:Minsky}, we have
\begin{eqnarray*}
i(\nu, \mu_n)&\leq& \mathrm{Ext}_{X_n}(\nu)^{1/2}\mathrm{Ext}_{X_n}(\mu_n)^{1/2}.
\end{eqnarray*}
Note that
\begin{eqnarray*}
&& \mathrm{Ext}_{X_n}(\nu)^{1/2}\mathrm{Ext}_{X_n}(\mu_n)^{1/2} \\
&=& K_{X_n}^{1/2}\frac{\mathrm{Ext}_{X_n}(\nu)^{1/2}}{K_{X_n}^{1/2}}\frac{\mathrm{Ext}_{X_n}(\mu_n)^{1/2} }{\mathrm{Ext}_{Y}(\mu_n)^{1/2} }\mathrm{Ext}_{Y}(\mu_n)^{1/2} \\
&=& e^{d_T(X_0,X_n)}\mathcal{E}_{X_n}(\nu) \frac{1}{e^{d_T(Y,X_n)}}\mathrm{Ext}_Y(\mu_n)^{1/2}\\
&\leq&\sup_{\mu\in\mathcal{PMF}}\{\mathrm{Ext}_{Y}(\mu)^{1/2} \} \mathcal{E}_{X_n}(\nu)e^{d_T(X_0,X_n)-d_T(Y,X_n)} \\
&\leq& C e^{d_T(X_0,Y)}\mathcal{E}_{X_n}(\nu),
\end{eqnarray*}
where $C$ is a constant depending on $Y$.
Taking a limit, we have
$$i(\nu, \mu_\infty)\leq Ct_0 e^{d_T(X_0,Y)}\mathcal{E}_{P}(\nu).$$
As a result,  if
 $\mathcal{E}_P(\nu)=0$, then $i(\nu,\mu_\infty)=0$.
\end{proof}

From the above lemmas, we have
\begin{corollary}
Let $(X_n)$ be a sequence of points in $\mathcal{T}(S)$ that converges to a point $P$ in the Gardiner-Masur boundary.
Let $Y$ be a point in $\mathcal{T}(S)$. Let $(\mu_n)$ be a sequence in $\mathcal{PMF}$ such that
$$d_T(X_n,Y)=\frac{1}{2}\log \frac{\mathrm{Ext}_Y(\mu_n)}{\mathrm{Ext}_{X_n}(\mu_n)}.$$
Let $\mu_\infty, \mu_\infty'\in \mathcal{PMF}$ be limit points of two different convergent subsequences of the sequence $(\mu_n)$. Then  $i(\mu_\infty, \mu_\infty')=0$.
\end{corollary}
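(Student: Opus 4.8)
The plan is to chain together the two preceding lemmas, Lemma \ref{Lem:limit} and Lemma \ref{Lem:intersection}. First, by Lemma \ref{Lem:limit} applied to the sequence $(X_n)$, $Y$ and $(\mu_n)$, every limit point in $\mathcal{PMF}$ of a convergent subsequence of $(\mu_n)$ lies in the zero set of $\mathcal{E}_P$; in particular $\mathcal{E}_P(\mu_\infty)=0$ and $\mathcal{E}_P(\mu_\infty')=0$. Note that this vanishing is well-defined on $\mathcal{PMF}$: since $\mathcal{E}_P$ is homogeneous of degree one by part $(\mathrm{i})$ of Lemma \ref{Lemma:Miyachi}, the condition $\mathcal{E}_P(\mu_\infty')=0$ does not depend on the chosen representative of the projective class.

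Next, fix a representative of $\mu_\infty'$ in $\mathcal{MF}$ and take $\nu=\mu_\infty'$ as the test foliation in Lemma \ref{Lem:intersection}. Since $\mathcal{E}_P(\nu)=0$, that lemma applies and tells us that every limit point of a convergent subsequence of $(\mu_n)$ has zero intersection number with $\nu$. Applying this conclusion to the particular subsequence that converges to $\mu_\infty$ yields $i(\mu_\infty',\mu_\infty)=i(\nu,\mu_\infty)=0$, which is exactly the assertion. (As usual the intersection number descends to projective classes up to a positive scalar, so $i(\mu_\infty,\mu_\infty')=0$ is meaningful on $\mathcal{PMF}\times\mathcal{PMF}$.)

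There is essentially no obstacle here: the corollary is a formal consequence of Lemmas \ref{Lem:limit} and \ref{Lem:intersection}. The only point requiring a moment's care is that $\mu_\infty'$ is legitimately allowed as the test foliation $\nu$ in Lemma \ref{Lem:intersection}, which holds precisely because Lemma \ref{Lem:limit} guarantees $\mathcal{E}_P(\mu_\infty')=0$. The roles of $\mu_\infty$ and $\mu_\infty'$ are symmetric, so either one may be taken as $\nu$.
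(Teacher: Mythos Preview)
Your proof is correct and is exactly the argument the paper has in mind: the corollary is stated immediately after Lemmas~\ref{Lem:limit} and~\ref{Lem:intersection} with only the phrase ``From the above lemmas, we have'' as justification, and you have spelled out precisely the intended two-step chaining of those lemmas.
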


\section{Proof of Theorem \ref{Thm:main}}\label{main}
Our proof of Theorem \ref{Thm:main} is inspired by Walsh \cite{Walsh}. He showed that
Thuston's compactification $\overline{\mathcal{T}(S)}^{Th}$ of $\mathcal{T}(S)$ is homeomorphic to the horofunction
compactification of $\mathcal{T}(S)$ with Thurston's Lipschitz metric. Moreover,
for each $\mu \in \mathcal{PMF}= \partial\overline{\mathcal{T}(S)}^{Th}$ (Thurston's boundary), the corresponding horofunction, which we denote
by $\Psi_\mu^{Th}$, is given by
\begin{equation}\label{Equation:horo-Th}
\Psi_\mu^{Th}(X)=\log \sup_{\nu\in \mathcal{MF}}\frac{i(\mu,\nu)}{\ell_X(\nu)}-\log \sup_{\nu\in \mathcal{MF}}\frac{i(\mu,\nu)}{\ell_{X_0}(\nu)},
\end{equation}
where $X_0$ is a fixed base point in $\mathcal{T}(S)$.

To prove Theorem \ref{Thm:main}, we will construct the horofunctions of $\mathcal{T}(S)$ with the Teichm\"uller metric by
replacing the  hyperbolic length $\ell_X(\mu)$ with the square root of the extremal length
$\mathrm{Ext}_X(\mu)^{1/2}$ and the intersection number $i(\mu,\cdot)$ by the function $\mathcal{E}_\mu(\cdot)$ (defined in Section 4).

For each $P\in \overline{\mathcal{T}(S)}^{GM}$, we define the map
\begin{eqnarray*}
\Psi_P(X)&=&\log \sup_{\mu\in \mathcal{MF}} \frac{\mathcal{E}_P(\mu)}{\mathrm{Ext}_X(\mu)^{1/2}}-\log \sup_{\mu\in \mathcal{MF}} \frac{\mathcal{E}_P(\mu)}{\mathrm{Ext}_{X_0}(\mu)^{1/2}} \\
&=& \log \sup_{\mu\in \mathcal{MF}} \frac{\mathcal{L}_P(\mu)}{\mathrm{Ext}_X(\mu)^{1/2}}
\end{eqnarray*}
for all $X\in \mathcal{T}(S)$. The last equality follows by the definition
$$\mathcal{L}_P(\mu)=\mathcal{E}_P(\mu) /  \sup_{\mu\in \mathcal{MF}} \frac{\mathcal{E}_P(\mu)}{\mathrm{Ext}_{X_0}(\mu)^{1/2}} .$$

Note that if $P\in \mathcal{T}(S)$, by $(\ref{Equation:ext})$ and Kerckhoff's formula,
\begin{eqnarray*}
\Psi_P(X)&=& \log \sup_{\mu\in \mathcal{MF}} \frac{\mathrm{Ext}_P(\mu)^{1/2}}{K_P^{1/2}\mathrm{Ext}_X(\mu)^{1/2}}-
\log \sup_{\mu\in \mathcal{MF}} \frac{\mathrm{Ext}_P(\mu)^{1/2}}{K_P^{1/2}\mathrm{Ext}_{X_0}(\mu)^{1/2}} \\
&=&  \log \sup_{\mu\in \mathcal{MF}} \frac{\mathrm{Ext}_P(\mu)^{1/2}}{\mathrm{Ext}_X(\mu)^{1/2}}-
\log \sup_{\mu\in \mathcal{MF}} \frac{\mathrm{Ext}_P(\mu)^{1/2}}{\mathrm{Ext}_X(\mu)^{1/2}} \\
&=& d_T(X,P)-d_T(X_0,P).
\end{eqnarray*}
Thus, the function $\Psi_P$ coincides with the function  defined in $(\ref{Equation:Psi})$ for the case where the metric space is $(\mathcal{T}(S),d_T)$.
We will show that it is injective
and continuous. Then Theorem \ref{Thm:main} will follow from a topological argument.

\begin{proposition}\label{pro:inj}
The map $\Psi: \overline{\mathcal{T}(S)}^{GM} \to C(\mathcal{T}(S)): P \to \Psi_P$ is injective.
\end{proposition}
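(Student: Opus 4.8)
The goal is to show $P \mapsto \Psi_P$ is injective on $\overline{\mathcal{T}(S)}^{GM}$. I would split into three cases according to whether the two points $P$, $P'$ lie in $\mathcal{T}(S)$ or on the Gardiner--Masur boundary.

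\emph{Case 1: both $P, P' \in \mathcal{T}(S)$.} Here, as computed just before the statement, $\Psi_P(X) = d_T(X,P) - d_T(X_0,P)$. Since $\Psi$ in \eqref{Equation:Psi} is known to be an embedding of the (locally compact, proper, geodesic) metric space $(\mathcal{T}(S), d_T)$ into $C(\mathcal{T}(S))$ --- this is the classical fact recalled in Section 3 --- injectivity on the interior is immediate. (Alternatively: if $\Psi_P = \Psi_{P'}$ then evaluating at $X = P$ gives $0 - d_T(X_0,P) = d_T(P,P') - d_T(X_0,P')$, and evaluating at $X = P'$ gives the symmetric identity; adding them forces $d_T(P,P') \le 0$.)

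\emph{Case 2: $P \in \mathcal{T}(S)$, $P' \in \partial\overline{\mathcal{T}(S)}^{GM}$.} I would argue that an interior function $\Psi_P$ cannot equal a boundary function. The cleanest route is to realize $\Psi_{P'}$ as a genuine horofunction: pick a sequence $X_n \to P'$ in $\mathcal{T}(S)$; by (iii) of Lemma~\ref{Lemma:Miyachi} and the formula for $\mathcal{L}_{P'}$, one checks $\Psi_{X_n} \to \Psi_{P'}$ locally uniformly (this is essentially Proposition~\ref{pro:conti}, the continuity statement, which I am allowed to invoke once proved, or can be done directly here using Theorem~\ref{Lem:converge}). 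Then $\Psi_{P'}$ lies in the horofunction boundary $\mathcal{T}(S)(\infty)$, which is disjoint from $\Psi(\mathcal{T}(S))$ because $\Psi$ is an embedding of the proper geodesic space $(\mathcal{T}(S),d_T)$ and $\Psi(\mathcal{T}(S))$ is open in its closure (equivalently: a function of the form $d_T(\cdot, P) - d_T(\cdot, X_0)$ attains its minimum, namely at $X=P$, while a horofunction coming from a divergent sequence does not). Hence $\Psi_P \ne \Psi_{P'}$.

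\emph{Case 3: both $P, P' \in \partial\overline{\mathcal{T}(S)}^{GM}$, and this is the main obstacle.} Suppose $\Psi_P = \Psi_{P'}$; I want $P = P'$, and by (ii) of Lemma~\ref{Lemma:Miyachi} it suffices to show $\mathcal{L}_P = \mathcal{L}_{P'}$ (equivalently $\mathcal{E}_P$ and $\mathcal{E}_{P'}$ agree up to scale). The idea is to recover $\mathcal{L}_P(\mu)$ from $\Psi_P$ by taking $X$ to infinity along a well-chosen Teichm\"uller geodesic ray. Fix $\mu \in \mathcal{MF}$; choose $\mu$ uniquely ergodic first (these are dense in $\mathcal{MF}$, so it is enough to treat this case and use continuity of $\mathcal{E}_P, \mathcal{E}_{P'}$). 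Let $X_t$ be the Teichm\"uller ray from $X_0$ with vertical foliation $\mu$. Along such a ray, $\mathrm{Ext}_{X_t}(\nu)$ grows like $e^{2t}\, i(\mu,\nu)^2 / \mathrm{Ext}_{X_0}(\mu)$ up to controlled error when $\nu$ is measured against the uniquely ergodic $\mu$ --- this is exactly the Extremal-Length analogue of Masur's estimate, and together with Lemma~\ref{Lem:Masur} (any $\nu$ with $i(\mu,\nu)=0$ is a multiple of $\mu$) it pins down the asymptotics of $\sup_{\nu}\frac{\mathcal{E}_P(\nu)}{\mathrm{Ext}_{X_t}(\nu)^{1/2}}$. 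More precisely one shows
$$
\lim_{t\to\infty}\Bigl(\Psi_P(X_t) + t\Bigr) = \log\frac{\mathcal{E}_P(\mu)}{\mathrm{Ext}_{X_0}(\mu)^{1/2}} + (\text{a term independent of }P),
$$
so that $\Psi_P = \Psi_{P'}$ forces $\mathcal{E}_P(\mu)/\mathrm{Ext}_{X_0}(\mu)^{1/2} = \mathcal{E}_{P'}(\mu)/\mathrm{Ext}_{X_0}(\mu)^{1/2}$ for every uniquely ergodic $\mu$, hence $\mathcal{E}_P = \mathcal{E}_{P'}$ on $\mathcal{MF}$ by density and continuity. The delicate point --- and where I expect to need Miyachi's help (as the acknowledgement hints) --- is justifying that the supremum defining $\Psi_P(X_t)$ is asymptotically governed by $\nu$'s close to $\mu$ in $\mathcal{PMF}$, i.e.\ that $\mathcal{E}_P(\nu)/\mathrm{Ext}_{X_t}(\nu)^{1/2}$ does not see significant contributions from foliations $\nu$ with $i(\mu,\nu) > 0$; this uses compactness of $\mathcal{PMF}$, the non-vanishing of $i(\mu,\cdot)$ off a neighborhood of $[\mu]$, and the uniform control in Lemma~\ref{Lemma:Miyachi}(iii).
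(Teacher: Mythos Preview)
Your Case~3 strategy---probe $\Psi_P$ along a Teichm\"uller ray $X_t$ with uniquely ergodic vertical foliation $\mu$ and localize the supremum defining $\Psi_P(X_t)$ to a neighborhood of $[\mu]$---is precisely the heart of the paper's proof. The paper, however, executes it more directly and dispenses with your case split entirely: given distinct $P,Q\in\overline{\mathcal{T}(S)}^{GM}$, it first finds (by continuity of $\mathcal{L}_P,\mathcal{L}_Q$) a neighborhood $\mathcal{N}\subset\mathcal{PMF}$ and constants $A<B$ with $\mathcal{L}_P\le A<B\le\mathcal{L}_Q$ on $\mathcal{N}$, then picks a \emph{single} uniquely ergodic $\mu_0\in\mathcal{N}$ with $\mathcal{L}_P(\mu_0)\neq 0$ (this uses Theorem~3 of \cite{Miyachi2}: for uniquely ergodic $\mu_0$ one has $\mathcal{E}_P(\mu_0)=0$ iff $P=[\mu_0]$), and shows that for $t$ large the supremum $\sup_{\nu}\mathcal{L}_P(\nu)/\mathrm{Ext}_{r(t)}(\nu)^{1/2}$ is already attained in $\mathcal{N}$. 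That immediately yields $\Psi_P(r(t))<\Psi_Q(r(t))$ at one value of $t$---no limit formula, no density-of-uniquely-ergodic argument, and no separate treatment of interior versus boundary points.

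Two slips in your formulation. First, the sign: since $\mathrm{Ext}_{X_t}(\mu)^{1/2}=e^{-t}\,\mathrm{Ext}_{X_0}(\mu)^{1/2}$ along the ray, one has $\Psi_P(X_t)\ge t+\log\mathcal{L}_P(\mu)$, and the localization gives the matching upper bound, so the correct asymptotic is $\Psi_P(X_t)-t\to\log\mathcal{L}_P(\mu)$, not $\Psi_P(X_t)+t$. Second, this limit equals $\log\mathcal{E}_P(\mu)-\log\mathcal{Q}(P)$, and $\log\mathcal{Q}(P)$ \emph{does} depend on $P$; so from $\Psi_P=\Psi_{P'}$ you only recover $\mathcal{L}_P(\mu)=\mathcal{L}_{P'}(\mu)$, i.e.\ proportionality of $\mathcal{E}_P$ and $\mathcal{E}_{P'}$, not equality. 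That still suffices by Lemma~\ref{Lemma:Miyachi}(\ref{m2}), but your sentence ``forces $\mathcal{E}_P(\mu)/\mathrm{Ext}_{X_0}(\mu)^{1/2}=\mathcal{E}_{P'}(\mu)/\mathrm{Ext}_{X_0}(\mu)^{1/2}$'' overstates what the argument gives. Finally, the ``delicate point'' you flag is exactly the paper's key computation, and it genuinely requires $\mathcal{L}_P(\mu_0)\neq 0$ to get the comparison constant $M_1$; the paper secures this by choosing $\mu_0\neq P$ and invoking Miyachi's result, a step you would also need.
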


\begin{proof} To prove the assertion, it suffices to prove that for any two distinct points $P, Q\in \overline{\mathcal{T}(S)}^{GM}$,
there exists a point $X\in \mathcal{T}(S)$ such that $\Psi_P(X)\neq \Psi_Q(X)$.

By Theorem \ref{Lem:converge},  $\mathcal{L}_P$ and $\mathcal{L}_Q$ are distinct. Without loss of generality, we assume that
$\mathcal{L}_P(\mu)< \mathcal{L}_Q(\mu)$ for some $\mu\in \mathcal{PMF}$.
Since $\mathcal{L}_P$ and $\mathcal{L}_Q$ are
continuous, we can take a neighborhood $\mathcal{N}$ of $\mu$ in $\mathcal{PMF}$ and real numbers $u$ and $v$ such that
\begin{equation}\label{Equation:PQ}
\mathcal{L}_P(\nu)\leq A < B \leq \mathcal{L}_Q(\nu).
\end{equation}
 for all $\nu\in \mathcal{N}$.

We recall that $\mathcal{PMF}= \partial\overline{\mathcal{T}(S)}^{Th}\subset \partial\overline{\mathcal{T}(S)}^{GM}$.
Since the set of uniquely ergodic measured foliations is dense in $\mathcal{PMF}$, we can choose a  uniquely ergodic measured foliation
$\mu_0 \in \mathcal{N}$. By identifying $\mathcal{PMF}$ with a subset of the Gardiner-Masur boundary, we also choose $\mu_0$ such that $P\neq \mu_0$. By Theorem 3 in \cite{Miyachi2},  this condition of $\mu_0$ means that $\mathcal{L}_P(\mu_0)\neq 0$. Therefore, there is an $M_1>0$ such that $\mathcal{L}_P(\mu_0)\geq M_1 \mathcal{L}_P(\nu)$ for any $\nu\in \mathcal{PMF}$.

Let $r(t)$  be a  Teichm\"uller  geodesic ray defined by a quadratic differential $q$ on $X_0$ with uniquely ergodic vertical measured foliation $\mathcal{F}_v(q)=\mu_0$.
By Theorem \ref{thm:Miyachi}, $r(t)$ converges to $\mu_0$ (considered as a point  in the Gardiner-Masur boundary), with  $\mathcal{E}_{\mu_0}(\cdot)=i(\mu_0, \cdot)$ up to a positive multiplicative constant.

From the sharpness of Minsky's inequality (see Theorem 5.1 in  \cite{GM}), we have
$$\sup_{\nu \in\mathcal{MF}}\frac{i(\mu_0,\nu)}{\mathrm{Ext}_{X_0}^{1/2}(\nu)}=\mathrm{Ext}_{X_0}^{1/2}(\mu_0)=1.$$ It follows that $\mathcal{L}_{r(t)}(\cdot)$ converges to $\mathcal{L}_{\mu_0}(\cdot)$ uniformly on compact sets of $\mathcal{MF}$ and
$$\mathcal{L}_{\mu_0}(\cdot)=i(\mu_0, \cdot).$$

We claim that for  $t$ sufficiently large, the supremum of $\mathcal{L}_{P}(\cdot)/\mathrm{Ext}_{r(t)}^{1/2}(\cdot)$ is attained in the set $\mathcal{N}$.
To see this, note that there are $T_0>0$ and $0<M_2<1$ such that $\mathcal{L}_{r(t)}(\nu)>M_2$ for all $\nu\in \mathcal{PMF}\setminus\mathcal{N}$
but $\mathcal{L}_{r(t)}(\mu_0)< M_1M_2$ for $t\geq T_0$. Therefore,
\begin{eqnarray*}
\frac{\mathcal{L}_{P}(\mu_0)}{\mathrm{Ext}_{r(t)}^{1/2}(\mu_0)}&=&  K_{r(t)}^{1/2} \frac{\mathcal{L}_{P}(\mu_0)}{\mathcal{L}_{r(t)}(\mu_0)}\\
&>&  K_{r(t)}^{1/2} \frac{\mathcal{L}_{P}(\mu_0)}{M_1M_2}\\
&\geq&  K_{r(t)}^{1/2} \frac{\mathcal{L}_{P}(\nu)}{M_2}\\
&\geq&  K_{r(t)}^{1/2} \frac{\mathcal{L}_{P}(\nu)}{\mathcal{L}_{r(t)}(\nu)}\\
&=& \frac{\mathcal{L}_{P}(\nu)}{\mathrm{Ext}_{r(t)}^{1/2}(\nu)}\\
\end{eqnarray*}
for all $\nu\in \mathcal{PMF}\setminus\mathcal{N}$.

As a result,
\begin{eqnarray*}
\sup_{\mu\in\mathcal{PMF}} \frac{\mathcal{L}_{P}(\mu)}{\mathrm{Ext}_X^{1/2}(\mu)}
&=&\sup_{\mu\in\mathcal{N}} \frac{\mathcal{L}_{P}(\mu)}{\mathrm{Ext}_X^{1/2}(\mu)} \\
&\leq&\sup_{\mu\in\mathcal{N}} \frac{A}{\mathrm{Ext}_X^{1/2}(\mu)}\\
&<&\sup_{\mu\in\mathcal{N}} \frac{B}{\mathrm{Ext}_X^{1/2}(\mu)}\\
&\leq& \sup_{\mu\in\mathcal{PMF}} \frac{\mathcal{L}_{Q}(\mu)}{\mathrm{Ext}_X^{1/2}(\mu)}.
\end{eqnarray*}
Thus $\Psi_P(X)< \Psi_Q(X)$.

\end{proof}

The following topological lemma will be used later. The proof here is given by Walsh \cite{Walsh}.
\begin{lemma}\label{lemma:top}
Let $X$ and $Y$ be two topological spaces and let $\Psi : X \times Y \to \mathbb{R} $ be a
continuous function. Let $(x_n)$ be a sequence in $X $converging to $x \in X$. Then,
$\Psi(x_n, \cdot)$ converges to $\Psi(x, \cdot)$ uniformly on compact sets of $Y$.
\end{lemma}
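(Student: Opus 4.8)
The plan is to run a standard compactness argument, reducing the claimed uniform statement on a compact set to a finite patching of local continuity estimates for $\Psi$ at points of the form $(x,y)$.

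First I would fix a compact set $K\subseteq Y$ and a tolerance $\epsilon>0$. For each $y\in K$, joint continuity of $\Psi$ at the point $(x,y)$ provides open neighbourhoods $U_y\ni x$ in $X$ and $V_y\ni y$ in $Y$ such that $|\Psi(x',y')-\Psi(x,y)|<\epsilon/2$ whenever $(x',y')\in U_y\times V_y$. In particular, for any $x'\in U_y$ and any $y'\in V_y$, applying the triangle inequality through the value $\Psi(x,y)$ (using that $(x,y')\in U_y\times V_y$ as well) yields $|\Psi(x',y')-\Psi(x,y')|<\epsilon$.

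Next, the family $\{V_y\}_{y\in K}$ is an open cover of $K$; by compactness I extract a finite subcover $V_{y_1},\dots,V_{y_m}$ and set $U=\bigcap_{j=1}^m U_{y_j}$, an open neighbourhood of $x$. Since $x_n\to x$, there is an $N$ with $x_n\in U$ for all $n\geq N$. Then for $n\geq N$ and any $y\in K$, choosing $j$ with $y\in V_{y_j}$ and noting $x_n\in U\subseteq U_{y_j}$, the estimate of the previous paragraph gives $|\Psi(x_n,y)-\Psi(x,y)|<\epsilon$. Taking the supremum over $y\in K$ shows $\sup_{y\in K}|\Psi(x_n,y)-\Psi(x,y)|\leq\epsilon$ for all $n\geq N$, which is exactly uniform convergence of $\Psi(x_n,\cdot)$ to $\Psi(x,\cdot)$ on $K$. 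Note that pointwise convergence need not be assumed: it is automatic, since $(x_n,y)\to(x,y)$ in $X\times Y$ and $\Psi$ is continuous.

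There is no genuine obstacle in this argument; the only point worth stressing is that continuity of $\Psi$ is a purely local condition at each $(x,y)$, and the role of compactness of $K$ is precisely to upgrade these separate local controls into a single neighbourhood $U$ of $x$ that works uniformly over all $y\in K$ — the same mechanism as in the proof of the tube lemma.
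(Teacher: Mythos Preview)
Your proof is correct and follows essentially the same compactness argument as the paper: both obtain, for each $y\in K$, neighbourhoods $U_y\times V_y$ on which $\Psi$ is close to $\Psi(x,y)$, extract a finite subcover $V_{y_1},\dots,V_{y_m}$ of $K$, and intersect the $U_{y_j}$ to get a single neighbourhood of $x$. If anything, your version is slightly more careful, since you use $\epsilon/2$ and an explicit triangle inequality through $\Psi(x,y_j)$ to obtain $|\Psi(x',y')-\Psi(x,y')|<\epsilon$, a step the paper leaves implicit.
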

\begin{proof}
Let $K$ be any compact subset of $Y$. For any $\epsilon>0$ and $y\in Y$, there are
open neighborhoods $x\in U_y \subset X, y\in V_y\subset Y$, such that
$$|\Psi(x',y')-\Psi(x,y)|<\epsilon, \ \forall \ x'\in U_y, y'\in V_y.$$
Since $K$ is compact, it can be covered by a finite collection of such neighborhoods $\{V_{y_1}, \cdots, V_{y_n}\}$.
Set $U=\cap_{i=1,\cdots, n} U_{y_i}$. Then for any $x\in U$, $|\Psi(x',y')-\Psi(x,y)|< \epsilon$ holds uniformly for all $y\in K$.
\end{proof}

\begin{lemma}
The map $\Psi: \overline{\mathcal{T}(S)}^{GM} \to C(\mathcal{T}(S)): P \to \Psi_P$ is continuous.
\end{lemma}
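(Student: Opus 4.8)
The plan is to prove continuity of $\Psi$ by combining the convergence criterion of Theorem~\ref{Lem:converge} with the topological Lemma~\ref{lemma:top}. The key observation is that, after rewriting, $\Psi_P(X) = \log \sup_{\mu \in \mathcal{MF}} \frac{\mathcal{L}_P(\mu)}{\mathrm{Ext}_X(\mu)^{1/2}}$, and since $\mathcal{L}_P$ is homogeneous of degree one and $\mathrm{Ext}_X(\cdot)^{1/2}$ is homogeneous of degree one, the supremum over $\mathcal{MF}$ may be taken over the compact set $\mathcal{PMF}$ (identified with $\{\mu : \mathrm{Ext}_{X_0}(\mu) = 1\}$). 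So the whole problem is reduced to a supremum of a continuous function over a compact set, depending continuously on two parameters.

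First I would show that the map $F : \overline{\mathcal{T}(S)}^{GM} \times \mathcal{PMF} \to \mathbb{R}$, defined by $F(P,\mu) = \mathcal{L}_P(\mu)$, is continuous. Joint continuity here is exactly Theorem~\ref{Lem:converge}: if $P_n \to P$ in $\overline{\mathcal{T}(S)}^{GM}$, then $\mathcal{L}_{P_n} \to \mathcal{L}_P$ uniformly on the compact set $\mathcal{PMF}$, and each $\mathcal{L}_{P_n}$ is continuous on $\mathcal{PMF}$; uniform convergence of continuous functions on a compact space gives joint continuity of $(P,\mu) \mapsto \mathcal{L}_P(\mu)$. Similarly, the map $(X,\mu) \mapsto \mathrm{Ext}_X(\mu)^{1/2}$ is continuous and bounded away from $0$ on compact subsets of $\mathcal{T}(S) \times \mathcal{PMF}$, since extremal length is continuous and strictly positive on $\mathcal{PMF}$. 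Hence for each fixed $X \in \mathcal{T}(S)$ the function $(P,\mu) \mapsto \frac{\mathcal{L}_P(\mu)}{\mathrm{Ext}_X(\mu)^{1/2}}$ is continuous on $\overline{\mathcal{T}(S)}^{GM} \times \mathcal{PMF}$, and taking the supremum over the compact fiber $\mathcal{PMF}$ yields a continuous function of $P$ alone. Since the supremum is strictly positive (as $\mathcal{L}_P \not\equiv 0$), composing with $\log$ keeps it continuous, so $P \mapsto \Psi_P(X)$ is continuous for each fixed $X$. To conclude that $\Psi : \overline{\mathcal{T}(S)}^{GM} \to C(\mathcal{T}(S))$ is continuous — i.e.\ continuous into the topology of locally uniform convergence — I would note that $\Psi_P(X)$ is in fact jointly continuous in $(P,X) \in \overline{\mathcal{T}(S)}^{GM} \times \mathcal{T}(S)$ by the same argument, and then invoke Lemma~\ref{lemma:top} with the roles $X \leftrightarrow \overline{\mathcal{T}(S)}^{GM}$ and $Y \leftrightarrow \mathcal{T}(S)$: if $P_n \to P$, then $\Psi_{P_n}(\cdot) \to \Psi_P(\cdot)$ uniformly on compact subsets of $\mathcal{T}(S)$, which is precisely convergence in $C(\mathcal{T}(S))$.

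The main obstacle will be establishing that the supremum defining $\Psi_P(X)$ really can be restricted to the compact set $\mathcal{PMF}$ and that the resulting function $(P,X,\mu) \mapsto \frac{\mathcal{L}_P(\mu)}{\mathrm{Ext}_X(\mu)^{1/2}}$ is genuinely jointly continuous — in particular that $\mathrm{Ext}_X(\mu)^{1/2}$ stays uniformly bounded below as $X$ ranges over a compact set and $\mu$ over $\mathcal{PMF}$, and that the normalization constant $\mathcal{Q}(P)$ hidden inside $\mathcal{L}_P$ does not degenerate. The degeneracy issue is handled by Theorem~\ref{Lem:converge}, which already packages the subtle behavior of $\mathcal{Q}(P_n) \to \mathcal{Q}(P)$ (via the subsequence argument with Lemma~\ref{Lemma:Miyachi}), so most of the delicate work has been done; what remains is routine compactness bookkeeping. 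I would therefore present the proof as: (1) rewrite $\Psi_P(X)$ as a supremum over $\mathcal{PMF}$; (2) cite Theorem~\ref{Lem:converge} for joint continuity of $\mathcal{L}_P(\mu)$; (3) use continuity and positivity of extremal length on the compact set $\mathcal{PMF}$; (4) deduce joint continuity of $\Psi_P(X)$ in $(P,X)$; (5) apply Lemma~\ref{lemma:top}.

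\begin{proof}
By homogeneity of $\mathcal{L}_P$ and of $\mathrm{Ext}_X(\cdot)^{1/2}$ (each of degree one), the supremum in
$$\Psi_P(X)=\log \sup_{\mu\in \mathcal{MF}} \frac{\mathcal{L}_P(\mu)}{\mathrm{Ext}_X(\mu)^{1/2}}$$
may be taken over $\mathcal{PMF}$, identified with the compact cross-section $\{\mu\in\mathcal{MF}\ :\ \mathrm{Ext}_{X_0}(\mu)=1\}$. By Theorem \ref{Lem:converge}, whenever $P_n\to P$ in $\overline{\mathcal{T}(S)}^{GM}$ the functions $\mathcal{L}_{P_n}$ converge to $\mathcal{L}_P$ uniformly on $\mathcal{PMF}$; since each $\mathcal{L}_{P_n}$ is continuous on $\mathcal{PMF}$, the map
$$\overline{\mathcal{T}(S)}^{GM}\times \mathcal{PMF}\ni (P,\mu)\longmapsto \mathcal{L}_P(\mu)$$
is jointly continuous. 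Moreover $(X,\mu)\mapsto \mathrm{Ext}_X(\mu)^{1/2}$ is continuous and strictly positive on $\mathcal{T}(S)\times\mathcal{PMF}$, hence bounded away from $0$ on any set of the form $K\times\mathcal{PMF}$ with $K\subset\mathcal{T}(S)$ compact. Therefore
$$g(P,X,\mu)=\frac{\mathcal{L}_P(\mu)}{\mathrm{Ext}_X(\mu)^{1/2}}$$
is continuous on $\overline{\mathcal{T}(S)}^{GM}\times\mathcal{T}(S)\times\mathcal{PMF}$. Taking the supremum of $g(P,X,\cdot)$ over the compact fiber $\mathcal{PMF}$ yields a function continuous in $(P,X)$; since $\mathcal{L}_P\not\equiv 0$, this supremum is strictly positive, so composing with $\log$ shows that
$$(P,X)\longmapsto \Psi_P(X)$$
is jointly continuous on $\overline{\mathcal{T}(S)}^{GM}\times\mathcal{T}(S)$.

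Now let $P_n\to P$ in $\overline{\mathcal{T}(S)}^{GM}$. Applying Lemma \ref{lemma:top} to the jointly continuous function $(P,X)\mapsto\Psi_P(X)$, with first factor $\overline{\mathcal{T}(S)}^{GM}$ and second factor $\mathcal{T}(S)$, we conclude that $\Psi_{P_n}(\cdot)$ converges to $\Psi_P(\cdot)$ uniformly on every compact subset of $\mathcal{T}(S)$, that is, $\Psi_{P_n}\to\Psi_P$ in $C(\mathcal{T}(S))$. Hence $\Psi$ is continuous.
\end{proof}
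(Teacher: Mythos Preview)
Your proof is correct and follows essentially the same approach as the paper's: both reduce the supremum to the compact set $\mathcal{PMF}$, invoke Theorem~\ref{Lem:converge} for the uniform convergence of $\mathcal{L}_{P_n}$, use positivity of $\mathrm{Ext}_X^{1/2}$ on $\mathcal{PMF}$, and then apply Lemma~\ref{lemma:top} to upgrade to convergence in $C(\mathcal{T}(S))$. If anything, your version is slightly more explicit in justifying the joint continuity of $(P,X)\mapsto\Psi_P(X)$, which the paper simply asserts.
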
\label{Lem:continuous}
\begin{proof}
Let $P_n$ be a sequence of $\overline{\mathcal{T}(S)}^{GM} $ converging to a point $P_0$ in $\overline{\mathcal{T}(S)}^{GM}$.
By Theorem \ref{Lem:converge}, $\mathcal{L}_{P_n}$ converges to $\mathcal{L}_{P_0}$ uniformly on compact sets of $\mathcal{MF}$.
For all $X\in \mathcal{T}(S)$, the square root of the extremal function $\mathrm{Ext_X}^{1/2}$ is bounded away from zero on $\mathcal{PMF}$.
We conclude that for any $X\in \mathcal{T}(S)$, $\frac{\mathcal{L}_{P_n}}{\mathrm{Ext}_X^{1/2}}$ converges uniformly on $\mathcal{PMF}$
to $\frac{\mathcal{L}_{P_0}}{\mathrm{Ext}_X^{1/2}}$. Since $$\Psi_{P}(X)=\log \sup_{\mu\in \mathcal{PMF}} \frac{\mathcal{L}_P(\mu)}{\mathrm{Ext}_X(\mu)^{1/2}},$$ $\Psi_{P_n}$ converges pointwise to $\Psi_{P_0}$.
Since the function $\Psi:\overline{\mathcal{T}(S)}^{GM}  \times \mathcal{T}(S)\to \mathbb{R}: (P, X)\to \Psi_P(X)$ is
continuous, and $P_n$ converges to $P_0$ in $\overline{\mathcal{T}(S)}^{GM}$,  by Lemma \ref{lemma:top},   $\Psi_{P_n}(\cdot)$ converges to $\Psi_{P_0}(\cdot)$ uniformly on any compact set of $\mathcal{T}(S)$.
By the definition of the topology of $C(\mathcal{T}(S))$, the map $\Psi: P\to \Psi_P(\cdot)$ is continuous.
\end{proof}

\begin{theorem}
The map $\Psi$ is a homeomorphism between  the horofunction compactification of $\mathcal{T}(S)$ with the Teichm\"uller metric and the Gardiner-Masur compactification of $\mathcal{T}(S)$.
\end{theorem}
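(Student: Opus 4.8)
The plan is to derive the statement purely from the two properties of the map $\Psi\colon \overline{\mathcal{T}(S)}^{GM}\to C(\mathcal{T}(S))$, $P\mapsto\Psi_P$, already in hand: it is injective by Proposition \ref{pro:inj} and continuous by Lemma \ref{Lem:continuous}. Recall that $\overline{\mathcal{T}(S)}^{GM}$ is compact by Gardiner--Masur, while $C(\mathcal{T}(S))$, equipped with the topology of locally uniform convergence, is Hausdorff (in fact metrizable, since $\mathcal{T}(S)$ is locally compact and $\sigma$-compact). Thus $\Psi$ is a continuous injection from a compact space into a Hausdorff space, hence a homeomorphism onto its image $\Psi\big(\overline{\mathcal{T}(S)}^{GM}\big)$, and this image is compact, and therefore closed, in $C(\mathcal{T}(S))$.

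It then remains only to identify this image with the horofunction compactification. For $P\in\mathcal{T}(S)$ the computation carried out just before Proposition \ref{pro:inj} gives $\Psi_P(X)=d_T(X,P)-d_T(X_0,P)$, so the restriction of $\Psi$ to $\mathcal{T}(S)$ is exactly the horofunction embedding $z\mapsto\Psi_z$ of $(\mathcal{T}(S),d_T)$ into $C(\mathcal{T}(S))$ defined in $(\ref{Equation:Psi})$. Consequently $\Psi\big(\overline{\mathcal{T}(S)}^{GM}\big)$ is a closed subset of $C(\mathcal{T}(S))$ containing $\Psi(\mathcal{T}(S))=\{\Psi_z : z\in\mathcal{T}(S)\}$, so it contains $\overline{\Psi(\mathcal{T}(S))}$; conversely, since $\mathcal{T}(S)$ is dense in $\overline{\mathcal{T}(S)}^{GM}$ and $\Psi$ is continuous, $\Psi\big(\overline{\mathcal{T}(S)}^{GM}\big)\subseteq\overline{\Psi(\mathcal{T}(S))}$. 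Hence
$$\Psi\big(\overline{\mathcal{T}(S)}^{GM}\big)=\overline{\Psi(\mathcal{T}(S))},$$
which is by definition the horofunction compactification of $(\mathcal{T}(S),d_T)$.

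Putting these together, $\Psi$ is a homeomorphism from $\overline{\mathcal{T}(S)}^{GM}$ onto the horofunction compactification. Moreover it respects the canonical copies of $\mathcal{T}(S)$ sitting inside both spaces: on $\mathcal{T}(S)$ it is the standard embedding $z\mapsto\Psi_z$ by the displayed formula, and being a bijection taking $\mathcal{T}(S)$ onto $\{\Psi_z:z\in\mathcal{T}(S)\}$ it takes the Gardiner--Masur boundary $\partial\overline{\mathcal{T}(S)}^{GM}$ homeomorphically onto the horofunction boundary $\mathcal{T}(S)(\infty)$, so that horofunctions of $(\mathcal{T}(S),d_T)$ are in natural bijection with points of the Gardiner--Masur boundary.

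The substance of the theorem has already been absorbed into Proposition \ref{pro:inj} (injectivity, which rests on the density of uniquely ergodic foliations and Miyachi's convergence theorem) and into Lemma \ref{Lem:continuous} (continuity, which rests on Theorem \ref{Lem:converge}); the final step above is a soft point-set argument with no real obstacle. The only points one must not skip are that $C(\mathcal{T}(S))$ is Hausdorff, so that a continuous bijection out of the compact space $\overline{\mathcal{T}(S)}^{GM}$ is automatically a homeomorphism onto its image, and that this image, being compact and hence closed, is forced to be all of $\overline{\Psi(\mathcal{T}(S))}$ rather than some proper closed subset.
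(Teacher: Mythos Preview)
Your proof is correct and follows essentially the same approach as the paper's: use injectivity (Proposition \ref{pro:inj}) and continuity (Lemma \ref{Lem:continuous}), invoke the fact that a continuous injection from a compact space into a Hausdorff space is a homeomorphism onto its image, and then identify that image with $\overline{\Psi(\mathcal{T}(S))}$. Your write-up is slightly more explicit than the paper's, spelling out both inclusions $\overline{\Psi(\mathcal{T}(S))}\subseteq \Psi\big(\overline{\mathcal{T}(S)}^{GM}\big)$ (image is closed) and $\Psi\big(\overline{\mathcal{T}(S)}^{GM}\big)\subseteq \overline{\Psi(\mathcal{T}(S))}$ (density of $\mathcal{T}(S)$ plus continuity), whereas the paper states the equality more tersely.
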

\begin{proof}
We have shown that $\Psi: \overline{\mathcal{T}(S)}^{GM}\to C(\mathcal{T}(S)) $ is injective and continuous.
Note that an embedding from a compact space to a Hausdorff space must be a homeomorphism
to its image (see Kelley \cite{Kelley}, Page 141 for the proof). As a result, $\Psi(\overline{\mathcal{T}(S)}^{GM})$ is a compact subset of $C(\mathcal{T}(S))$.
Since the horofuction compactification is the closure of $\Psi(\mathcal{T}(S))$, it is equal to
 $\Psi(\overline{\mathcal{T}(S)}^{GM})$.
\end{proof}

Miyachi \cite{Miyachi}  constructed an embedding $\Phi$ from $\overline{\mathcal{T}(S)}^{GM}$ into
the space $C(\mathcal{MF}(S))_{\geq 0}$ of non-negative continuous functions on $\mathcal{MF}(S)$, defined by
$$\Phi_P(\cdot):= \frac{\mathcal{E}_P(\cdot)}{\mathcal{E}_P(\alpha)+\mathcal{E}_P(\beta)}$$
where $\alpha,\beta$ is a pair of simple closed curves filling $S$. Such an embedding allows us to give a distance
function on $\overline{\mathcal{T}(S)}^{GM}$ by
$$\mathrm{dist}(P, Q)=\max_{\mu\in \mathcal{PMF}}|\Phi_P(\mu)-\Phi_Q(\mu)|.$$
The topology induced by the distance is compatible with the topology given by the embedding $\Psi(\overline{\mathcal{T}(S)}^{GM})\subseteq C(\mathcal{T}(S))$.

\frenchspacing

\end{document}